\newtheorem{prop}{Proposition}
\newtheorem{dfn}{Definition}
\pgfplotsset{compat=newest}
\tikzstyle{line} = [ draw, -latex']
\def\black{\color{black}}
\DeclareMathOperator*{\VaR}{VaR}
\DeclareMathOperator*{\CVaR}{CVaR}
\date{ }
\title{\vspace{-1cm}
Incorporating Black-Litterman Views \\ in Portfolio Construction \\ when Stock Returns are a Mixture of Normals
}
\author{Burak Kocuk\thanks{Corresponding author. burakkocuk@sabanciuniv.edu, Industrial Engineering Program,  Sabanc{\i} University, Istanbul, Turkey 34956. }  \ and 
G\'erard Cornu\'ejols\thanks{gc0v@andrew.cmu.edu, Tepper School of Business, Carnegie Mellon University, Pittsburgh, PA 15213 USA.
}
}
\begin{document}

\maketitle

\vspace{-1cm}
\begin{abstract}
In this paper, we consider the basic problem of portfolio construction in financial engineering, and analyze how market-based and analytical approaches can be combined to obtain efficient portfolios.
As a first step in our analysis, we model the asset returns as a random variable distributed according to a mixture of normal random variables.
We then discuss how to construct portfolios that minimize the Conditional Value-at-Risk (CVaR) under this probabilistic model via a convex program. We also construct a second-order cone representable approximation of the CVaR under the mixture model, and demonstrate its theoretical and empirical accuracy.
Furthermore, we incorporate the market equilibrium information into this procedure through the well-known Black-Litterman approach via an inverse optimization framework {by utilizing the proposed approximation}.
Our computational experiments on a real dataset show that this approach with an emphasis on the market equilibrium typically yields less risky portfolios than a purely market-based portfolio while producing similar returns on average.

\noindent\textit{Keywords: portfolio selection, finance, the Black-Litterman model, mixture of normals, conditional value-at-risk}
\end{abstract}

\section{Introduction}

Portfolio construction is one of the most fundamental problems in financial engineering: Given $n$ risky assets, historical information about their returns and market capitalization of these assets, construct a  portfolio that will produce the maximum expected return at a minimum risk. Maximizing expected return and minimizing risk are often conflicting objectives, and hence a compromise should be made by investors based on their risk aversion.

We  consider two paradigms for solving this problem: an ``analytical" approach and one that is ``market-based". In the analytical approach, key parameters such as the vector of mean asset returns, denoted by $\mu$, and covariance between the asset returns, denoted by $\Sigma$, are estimated from historical data. Then, a combination of the expected portfolio return and a risk measure is optimized by solving a problem of the form:
\begin{equation}\label{eq:portfolio generic}
\max_{x \in \mathcal{X}} \  \{ \mu^T x - \delta  {R}(x) \}.
\end{equation}
Here,  ${R}(x)$ is the risk of portfolio $x$ under a chosen risk measure, $\delta$ is a positive, predetermined risk aversion factor and $\mathcal{X}$ is the set of feasible portfolios. The earliest example of this approach is 
\citeA{markowitz1952}, in which a mean-variance (MV) optimization problem{\black\footnote{\black Although  variance and standard deviation are technically  deviation measures, by adapting a slight abuse of terminology, we will be referring to them as risk measures as well.}} is proposed with $n=3$ assets, ${R}(x) = x^T \Sigma x$ and $\mathcal{X} = \Delta_n :=\{x\in\mathbb{R}^n: \ \sum_{j=1}^n x_j = 1, \ x\ge 0\}$.

In the market-based approach, one merely invests in a ``market portfolio" proportional to the current market capitalization of the assets. The logic behind this market-based approach is the Efficient-Market Hypothesis \cite{fama1970}, which loosely states that the price of an asset captures all the information about that asset. 

There are advantages and disadvantages to each approach. The major advantage of the analytical approach is that if the parameter predictions are accurate, then it can yield provably optimal portfolios. Unfortunately, this almost never happens in practice.
 In particular, the estimation of the mean return vector $\mu$ is error-prone and
 even a small perturbation in the parameter estimation can yield completely different portfolios due to what is called the ``error-maximization property" in \citeA{Michaud1989}. Robust optimization techniques are proposed to circumvent the difficulties caused by parameter estimation in \citeA{goldfarb2003, ghaoui2003, tutuncu2004, ceria2006, demiguel2009, zhu2009worst}.
Another issue with the analytical approach is the determination of the risk aversion parameter~$\delta$. Choosing smaller values of $\delta$ puts more emphasis on the expected return and since the estimates of $\mu$ are generally inaccurate, it may lead to poor portfolios in practice.
One may alleviate this issue by simply choosing $\delta $ infinitely large, thus reducing the generic portfolio optimization \eqref{eq:portfolio generic} to a ``risk minimizing" portfolio problem (see, for instance,  \citeA{demiguel2009}). 

Another issue regarding the analytical approach is to specify an adequate risk measure. Although variance (or standard deviation) is a typical risk measure, others are also used, for instance, Value-at-Risk (VaR) and Conditional Value-at-Risk (CVaR), which can better quantify the downside risk. One difficulty with these measures is that {\black either} a distributional assumption should be made, which requires modeling stock returns as random variables, {\black or sampling-based simulation methods should be utilized based on the historical observations}. A first choice is a multivariate normal distribution, which allows easy-to-solve risk minimization problems for both VaR and CVaR measures. However,  the normal distribution typically does not provide a good fit to the stock return data due to heavy tails. Other probabilistic models have been proposed including stable distribution \cite{mandelbrot1963}, $t$-distribution \cite{blattberg1974},  and mixture of multivariate normals \cite{chin1999computing, buckley2008, chen2013novel, wang2015} among others.
As the probabilistic model becomes more complex, it might be difficult to solve optimization problems involving VaR or CVaR terms. {\black In such cases, return scenarios can be generated through Monte Carlo simulation to estimate these measures without any distributional assumption \cite{krokhmal2002}}.


The main advantage of the market-based approach is   its simplicity since it does not require any parameter estimation, optimization or distribution fitting procedure. One can, for instance, simply track Standard \& Poor's 500 index, which is arguably a representative proxy for the United States stock market. The disadvantage of  the market-based approach is its inflexibility. For instance, if an investor believes that a certain stock will outperform another, this approach does not allow to incorporate this view.


Several studies try to combine the two approaches. For instance, in the early 1990s, \citeA{black1991, black1992} proposed a way of combining the market portfolio and investor views into the classical MV approach. In practice,  portfolios obtained using the Black-Litterman (BL)   methodology tend to be more robust to data perturbations. But there are also some issues with the BL methodology: For instance, the derivation of the estimates is not very intuitive and a number of papers including \citeA{he1999, satchell2000, drobetz2001avoid, meucci2010black} attempt to clarify it from different perspectives.  Moreover, the BL derivation is based on strong assumptions, one of them being the normality of the random return vector.  Furthermore, parameters have to be determined exogenously to incorporate the confidence in the  investor views.

Recently, the BL model has been generalized using an  interpretation as an inverse optimization problem in \citeA{bertsimas2012}. Different from the classical derivation of the BL model, mean and covariance of the returns are determined  as the solution of a conic program. This approach is flexible enough to eliminate some of the shortcomings of the BL methodology such as its inability to include views on variance, necessity to exogenously choose several parameters etc.
Other recent extensions of the BL methodology include \citeA{jia2016} in which inverse optimization incorporates views on the variance, 
{\black \citeA{pang2018} in which a closed-form solution is derived when the risk measure is chosen as CVaR and the stock returns are assumed to follow an elliptical distribution,}  
and \citeA{silva2017} in which views are created using Verbal Decision Analysis. 

In this paper, we address several issues raised above and extend recent work. We start our analysis by focusing on the vector of stock returns modeled as a random variable. 
Using Standard \& Poor's (S\&P) 500 dataset, we show that the returns are not normally distributed via statistical tests, and we propose an enhanced probabilistic model, namely a mixture of normal random variables. 
Then, we discuss how to construct risk minimizing portfolios under different probabilistic models (normal and mixture of normals) and risk measures (standard deviation and CVaR). We also propose a BL-type approach, which  incorporates market-based information into CVaR minimizing portfolios.

\citeA{buckley2008} model the stock returns as a mixture of two normal random variables, considering several objective functions for the mean and variance of each ``regime". The main difference in our work is that we optimize portfolios  directly with respect to the CVaR of a mixture distribution {\black via a convex program.}


Our key contributions in this paper are summarized below:
\begin{enumerate}[1.]

\item
We analyze how to construct portfolios that minimize CVaR under the mixture model. Although CVaR minimization under the normal distribution is straightforward, resulting in a second-order cone program, the case with the mixture is less  obvious since the CVaR of a mixture distribution does not have a closed form expression. We analyze how it can be numerically computed {\black and optimized via a convex program}. We also propose a closed form, second-order cone representable approximation of CVaR in this~case.

\item
We extend the work on the BL approach via inverse optimization to CVaR minimization  under both normal and mixture distribution models. In the latter case, we propose a sophisticated approach, which combines our previous two contributions. In particular, our model is governed by the market equilibrium equation,
   and  the parameters of the mixture distribution are treated as  investor views.

\item
We present computational results applied to the S\&P 500 dataset. Empirically, we observe that,
as expected, market-based portfolios typically have higher reward and higher risk than risk minimizing portfolios.
However, we show on the same dataset that a certain combination of the market-based and risk minimizing portfolios obtained through a BL-type approach may yield portfolios with similar rewards and smaller risk.
\end{enumerate}

The rest of the paper is organized as follows:
In Section~\ref{sec:stat analy}, we provide a statistical analysis of the S\&P 500 dataset and model the stock returns as a mixture of normal random variables.
In Section~\ref{sec:risk-min},  we present our portfolio optimization problem with different risk measures.
In Section~\ref{sec:combine}, we propose a new approach to combine CVaR minimizing portfolios with market information to obtain BL-type portfolios.
In Section~\ref{sec:comp}, we present our computational experiments on the S\&P 500 dataset, and compare market-based, risk minimizing and combination portfolios.
Finally,  Section~\ref{sec:conclusion} concludes our paper with further remarks. 

\section{Statistical Analysis of Stock Returns}
\label{sec:stat analy}


As opposed to the standard Markowitz approach, which does not require any distribution information on the asset returns to construct the portfolios, the VaR and CVaR measures need  {\black either} explicit forms of the distributions, {\black or the use of sampling based methods}. 
It is not uncommon in the finance literature to model the stock returns as normally distributed random variables. However, stock returns are rarely normally distributed, and typically have heavier tails. Therefore, it is crucial to use a different probabilistic model to capture the
heavy tail effect, especially the left tail, which is closely related to the risk of a portfolio when the VaR and CVaR measures are used.

{\black In this section, we use a real dataset, specifically the stocks in Standard \& Poor's (S\&P) 500 index over a 30-year time span. Since the statistical evidence suggest that the stock returns do not come from a normal distribution, we propose an alternative probabilistic model, namely, we model the stock returns as a mixture of normal random variables  and explain how the parameters of the mixture distribution can be estimated.}


\subsection{Data Collection and Normality Tests}
\label{sec:data coll}
We first collected historical stock returns and market capitalization information from the Wharton Research Database Services (WRDS).  Since working with tens of thousands of different stocks is not
appropriate for this study, we focused on stocks in the S\&P 500 index. Following~\citeA{bertsimas2012}, we further simplified our analysis by focusing on 11 sectors according to the Global Industrial Classification Standard (GICS). 
{ As a consequence of this simplification, we do not need to keep track of the assets that enter or leave the S\&P 500, rather we concentrate on the overall performance of each sector as the average performance of its constituents.}
Using WRDS, we collected the return and market capitalization information for all the stocks that have been in the S\&P 500 between January 1987-December 2016, spanning a 30-year period. We then computed the return of a sector for each month as a weighted average of the returns of the companies in S\&P 500  in that particular time period, where the weights are taken as the market capitalization of each stock in that sector. This procedure gave us 360 sector return vectors of size 11, denoted by $R^t$, $t=1,\dots, 360$. We also recorded the percentage market capitalization of each sector $j$ in month $t$, denoted by $M_j^t$, $j=1,\dots,11$, $t=1,\dots,360$.

{
We use an \texttt{R} package called \texttt{MVN} \cite{korkmaz} to formally test the multivariate normality of the sector return vectors. 
We also test whether the returns of individual sectors are normally distributed.
According to our extensive tests, 
we conclude that there is significant evidence that the neither the sector return vector nor the returns of individual sectors are normally distributed as expected.
}

\subsection{Modeling Returns as Mixture of Normal Random Variables}
\label{sec:two models}
The fact that the vector of sector returns is unlikely to come from a multivariate normal distribution motivates us to search for an alternate probabilistic model that better explains the randomness of the stock returns. We will try to construct such a model using mixtures of (multivariate) normal random variables.

This choice for our model can be explained from two perspectives.
First, we note that the stock returns have typically heavier left tails, which can be considered as the most critical part since it directly relates to the investment risk. This was previously observed by many researchers, including  the J.P. Morgan Asset Management group \cite{sheikh2010non}. In this paper, we try to capture this effect by introducing a mixture of random variables.
An intuitive explanation for this phenomenon is offered as follows: Under ``regular"  conditions, the market, in fact, behaves following an approximate normal distribution. However, every once in awhile, a ``shock" happens and shifts the mean of the return distribution to the left with possibly higher variance. This can explain the relatively heavier left tails of the empirical distribution.
 Second, as we will demonstrate below, introducing this more sophisticated probabilistic model greatly improves the fit to the data.  However, this better fit comes with a cost of more complicated procedures for data fitting and for portfolio optimization. The data fitting issue will be addressed in the remainder of this section. As for the portfolio optimization procedures under these more complicated distributions, they will be discussed  in Section \ref{sec:mixture cvar opt}.

Formally, let us assume that the random return is distributed as a mixture of two multivariate normal random variables, that is, with some probability $\rho_i$, returns are normally distributed with mean $\mu^i$ and covariance matrix $\Sigma^i$, for $i=1,2$.
In other words, we have
\begin{equation*}
r_M = \begin{cases}  r_{M,1} & \text{w.p. } \rho_1 \\r_{M,2} & \text{w.p. } \rho_2 \end{cases}
\text{ where } r_{M,i} \sim \text{N}(\mu^i, \Sigma^i), \ i=1,2.
\end{equation*}
Note that if $\rho_i$, $\mu^i$, $\Sigma^i$, $i=1,2$,  are given, we can compute the expectation and covariance of $r_M$ as follows:
\begin{equation}\label{eq: r mean var}
\text{E}[r_M] = \rho_1 \mu^1 + \rho_2 \mu^2 \text{ and } \text{Cov}(r_M) = \rho_1 \Sigma^1+ \rho_2 \Sigma^2 + \rho_1 \rho_2 (\mu^1-\mu^2)(\mu^1-\mu^2)^T.
\end{equation}
A direct approach to estimate the parameters of the mixture distribution is to use maximum-likelihood estimation, which can be achieved by the Expectation-Maximization (EM) Algorithm proposed in \citeA{dempster}. The EM algorithm can  be seen as a clustering method in the sense that it partitions a given dataset into  a predetermined number of subsets. For our purposes, we  call the EM algorithm with 2 clusters and obtain the estimates of the parameters of the mixture distribution. 
%
We also gather statistical evidence that the proposed mixture model 
improves the fit to the S\&P 500 dataset  through Likelihood Ratio Test. 


There are a couple of points to be clarified in the implementation of the EM algorithm. For instance, the likelihood function of a mixture distribution is not concave. Therefore, the EM algorithm may  give some estimates that are only locally optimal. One can initialize the algorithm with random starting points to try to overcome this issue. Another important point, which might be more subtle, is the meaningfulness of the estimates: Suppose that $\hat \mu^2$ and $\hat \mu^1$ represent the mean return vectors for regular (good) and non-regular (bad) market conditions respectively. We would expect that $\hat \mu^2 > \hat \mu^1$. Although the algorithm itself does not guarantee such an outcome, we do actually obtain  $\hat \mu^2 > \hat \mu^1$ and $\hat \rho_2 > \hat \rho_1$, which is in accordance with our intuition (see Table~\ref{table:mix market-wise}). 

\begin{table}[h]\small
\centering
\begin{tabular}{cccccccc}
\hline
                       & \multicolumn{ 2}{c}{Normal} & &\multicolumn{ 4}{c}{Mixture ($\rho_1 = 0.19$, $\rho_2=0.81$)} \\

\cline{2-3}
\cline{5-8}

  Sector         &                     $\mu$ &       ${\sigma}$ &     &   $\mu^1$ &       $\mu^2$  &      ${\sigma^1}$ &    ${\sigma^2}$   \\
\hline

    Energy &                 1.1769 &     6.2823 &     &   -0.0686 &     1.4687 &     8.5162 &     5.5799 \\

Consumer Discretionary             &     1.5112 &     5.3195 &      &   0.4788 &     1.7532 &     8.2673 &     4.3019 \\

Consumer Staples &                 1.3905 &     4.1470 &     &    0.6265 &     1.5696 &     6.7426 &     3.2166 \\

Real Estate &                 1.1514 &     7.2451 &    &     0.3782 &     1.3327 &    11.9825 &     5.5492 \\

Industrials &                 1.2887 &     5.1505 &     &    0.1638 &     1.5523 &     8.1720 &     4.0790 \\

Financials &                1.3322 &     6.2810 &      &   0.7178 &     1.4762 &    10.0766 &     4.9656 \\

Telecommunications Services             &     1.0318 &     5.4780 &     &    0.3681 &     1.1873 &     8.7565 &     4.3408 \\

Information Technology             &     1.7264 &     7.1032 &   &      1.3907 &     1.8051 &    11.8056 &     5.4234 \\

 Materials &                 1.3898 &     5.6955 &      &   0.0673 &     1.6998 &     8.4971 &     4.7437 \\

Health Care &                 1.4164 &     4.6432 &    &     1.3203 &     1.4389 &     6.9005 &     3.9212 \\

 Utilities &                 1.0140 &     4.2801 &     &    0.6794 &     1.0924 &     6.1977 &     3.6764 \\

\hline

\end{tabular}
\caption{Estimates for normal and mixture of normal fits  
with 360-month data (all figures are in percentage). Covariances between the sectors are not reported for brevity. Here, $\sigma^i$  is a vector consisting of the standard deviation of sectors,~$i=~1,2$.}\label{table:mix market-wise}
\end{table}

\section{Risk Minimizing Portfolio Construction}
\label{sec:risk-min}

In this section, we discuss how to obtain risk minimizing portfolios under different probabilistic models  and risk measures. We  start with standard deviation minimization in Section \ref{sec:SDM}, which does not require any distribution information on the returns. Then, we consider CVaR minimization when the sector returns are modeled as multivariate normal distributions in Section \ref{sec:CMN}. These two optimization problems are well-known in the financial engineering literature. Our main contribution is presented in Section \ref{sec:mixture cvar opt}, in which we show how CVaR can be minimized when the sector returns are modeled as a mixture of normals.

{\black
Throughout the paper, we follow the standard convention in the financial industry in which the risk measures VaR and CVaR are defined with respect to the left tail of the return distributions. 
For completeness, we now provide the proper definitions of these risk measures and some basic facts. 

\begin{dfn}\label{dfn:VaR}
Let $\alpha \in (0,1)$. The $\alpha$-level {VaR} of a random variable $Z$ is defined as
\begin{equation*}\label{eq:VaR def general}
{\VaR}_\alpha (Z) = \inf_{z\in\mathbb{R}} \{ \text{P}(z+Z \ge 0) \le 1 - \alpha\}.
\end{equation*}
In other words, ${\VaR}_\alpha (Z) $ is {the negative of the $\alpha$-quantile} of the random variable $Z$.
\end{dfn}

\begin{dfn}\label{dfn:CVaR}
Let $\alpha \in (0,1)$. The $\alpha$-level {CVaR} of a random variable $Z$ is defined as
\begin{equation*}\label{eq:VaR def general}
{\CVaR}_\alpha (Z) = - \text{E}[Z | Z \le  -\text{VaR}_\alpha(Z)] .
\end{equation*}
\end{dfn}
{\black 
\noindent We would like to point out that the random variable $Z$ in Definitions \ref{dfn:VaR} and \ref{dfn:CVaR} represents  the return of a portfolio. One can equivalently define VaR and CVaR with respect to the loss of a portfolio as well.
}

{\black \indent Due to \citeA{rockafellar2002}, ${\CVaR}_\alpha (Z) $ can be computed as the optimal value of the following convex minimization problem
\begin{equation}\label{eq:cvar generic min}
{\CVaR}_\alpha (Z) = \min_{c \in\mathbb{R}}\left\{ c - \frac1\alpha \text{E}[(Z+c)_-]   \right\},
\end{equation}
where $(u)_- := \min\{0,u\}$. We also note that the minimizer of the above optimization problem  gives  ${\VaR}_\alpha (Z) $.
}

The availability of the explicit computation of VaR and CVaR depends on the underlying probability distribution. For instance, for a normal random variable $Z$ with mean~$\nu$ and variance~$\sigma^2$,  the $\alpha$-level VaR and CVaR of $Z$, $\alpha \in(0,1)$, can be computed as 
\[
{\VaR}_\alpha (Z) = -\nu -    \Phi^{-1}(\alpha) \sigma  \ \text{ and } \ 
{\CVaR}_\alpha (Z) = -\nu  + \frac{ \phi( \Phi^{-1}(\alpha) )}{\alpha} \sigma,
\]
where $\phi $ and $\Phi$ are respectively the probability density function (pdf) and  the cumulative distribution function (cdf) of the standard normal distribution. If $\alpha \ge 1$, we will set
$
{\VaR}_\alpha (Z) =- \infty, 
$
for notational convenience.
%
%

}

\subsection{Standard Deviation Minimization}
\label{sec:SDM}

Assuming that the covariance matrix $\Sigma$ is estimated from historical data, we can solve the following problem to minimize the standard deviation of the portfolio return:
\begin{equation}\label{eq:stdev min}
\min_{x \in \Delta_n} \big \{ \sqrt{x^T \Sigma x}  \big \}.
\end{equation}

We note that problem \eqref{eq:stdev min} can be solved efficiently either as  a quadratic program (after squaring the objective function) or as a second-order cone program (SOCP) in a lifted space.

\subsection{CVaR Minimization under Normal Distribution}
\label{sec:CMN}

Let us assume that the vector of sector returns, denoted by $r_N$, is modeled to come from a multivariate normal distribution with mean parameter $\mu$ and covariance matrix $\Sigma$, estimated from historical data.
Then, we can obtain a  portfolio minimizing  CVaR by solving
\begin{equation}\label{eq:cvar normal}
\min_{x \in \Delta_n} \left \{ {\CVaR}_\alpha (r_N^T x)   \right \},
\end{equation}
where the $\alpha$-level CVaR of the return of a portfolio $x$ is computed as
\begin{equation}\label{eq:cvar normal def}
{\CVaR}_\alpha (r_N^T x) = -\mu^T x + \frac{ \phi( \Phi^{-1}(\alpha) )}{\alpha}\sqrt{x^T \Sigma x}.
\end{equation}

We again note that problem \eqref{eq:cvar normal} can be formulated as an SOCP in  a lifted space.

\subsection{CVaR Minimization under Mixture Distribution}
\label{sec:mixture cvar opt}

Now, let us assume that the vector of sector returns, denoted by $r_M$, is modeled to come from a mixture of normal distributions with  parameters $\rho_i$,  $\mu^i$, $\Sigma^i$, $i=1,2$, obtained from the historical data using the technique proposed in Section \ref{sec:two models}.
In this case, we would like to obtain a CVaR minimizing portfolio by solving the following optimization problem:
\begin{equation}\label{eq:cvar mixture}
\min_{x \in \Delta_n} \left \{ {\CVaR}_\alpha (r_M^T x)   \right \}.
\end{equation}
As CVaR is a convex function \cite{pflug2000},  the optimization problem \eqref{eq:cvar mixture} is again a convex program. 
{\black Since the CVaR of a mixture distribution does not have a closed form expression, we will utilize the expression \eqref{eq:cvar generic min} to obtain an explicit convex program which can be used to the CVaR minimization problem  \eqref{eq:cvar mixture}.} 
For notational purposes, let us define  $\nu_i := {\mu^i}^Tx$, $\sigma_i^2 := x^T\Sigma^i x$, $i=1,2$ and $V:=\VaR_\alpha (r_M^T x)$.

\subsubsection{{\black Computing and Optimizing}  CVaR under Mixture Distribution}
\label{sec:mixture cvar opt comp}
We first note that ${\CVaR}_\alpha (r_M^T x)$ can be computed analytically if $V$ is at hand (also derived in \citeA{Broda2011}) through
\begin{equation}\label{eq:cvar mixture analytic}
\begin{split}
{\CVaR}_\alpha(r_M^Tx) =&-\frac1\alpha \int_{-\infty}^{-V} y \sum_{i=1}^2 \rho_i  \frac{1}{\sqrt{2\pi\sigma_i^2}}  e^{ -\frac{(y-\nu_i)^2}{2\sigma_i^2} } dy \\
=&-\frac1\alpha\sum_{i=1}^2 \rho_i    \int_{-\infty}^{-V} y  \frac{1}{\sqrt{2\pi\sigma_i^2}}  e^{ -\frac{(y-\nu_i)^2}{2\sigma_i^2} } dy \\
=&\frac1\alpha\sum_{i=1}^2  \rho_i [ \sigma_i^2 {\phi(\nu_i, \sigma_i^2, -V)}- \nu_i { \Phi(\nu_i, \sigma_i^2, -V)}] .
\end{split}
\end{equation}
Here, $\phi(\nu, \sigma^2, y)$ and $\Phi(\nu, \sigma^2, y)$ are respectively the pdf and cdf of the normal distribution with mean  $\nu$ and variance $\sigma^2$ evaluated at the point $y$.

However, $V=\VaR_\alpha (r_M^T x)$ does not have a closed form expression either. 
{\black Fortunately, one does not need to have the exact value of $V$ to solve the problem \eqref{eq:cvar mixture}. Adapting the generic definition 
{\black of} CVaR in equation \eqref{eq:cvar generic min} to the special case of mixture of normals, we obtain
\begin{equation}\label{eq:cvar mixture comp opt}
{\CVaR}_\alpha(r_M^Tx) = \min_{c \in \mathbb{R}} \left\{c  +  \frac1\alpha\sum_{i=1}^2  \rho_i [ \sigma_i^2 {\phi(\nu_i, \sigma_i^2, -c)}- (c+\nu_i) { \Phi(\nu_i, \sigma_i^2, -c)}] \right \},
\end{equation}
where the optimal solution $c^*$ corresponds to $\VaR_\alpha (r_M^T x)$.
Finally, the resulting CVaR minimization  problem can be stated explicitly as follows:
\begin{equation}\label{eq:cvar mixture one-shot}
\min_{ (x,c) \in \Delta_n \times \mathbb{R}  } \left\{c  +  \frac1\alpha\sum_{i=1}^2  \rho_i \left [ (x^T\Sigma_i x) {\phi({\mu^i}^Tx,  x^T\Sigma^i x, -c)}- (c+{\mu^i}^Tx) { \Phi({\mu^i}^Tx,  x^T\Sigma^i x, -c)} \right]   \right \}.
\end{equation}
}

\subsubsection{Approximating  CVaR under Mixture Distribution}
\label{sec:approx CVaR}

%
%

In the previous section, {\black we mentioned that ${\CVaR}_\alpha(r_M^Tx)$ can be  computed by solving the convex program \eqref{eq:cvar mixture comp opt}. In this section, we develop an explicit and second-order cone representable over-approximation of the same quantity, for the reasons that will become clearer  in Section \ref{sec:mixture dist} when we incorporate the Black-Litterman views into the portfolio construction procedure via inverse optimization. We also provide the approximation guarantee of the proposed approximation together with some empirical evidence of its accuracy when applied to the S\&P 500 dataset.
}


Below, we first provide  under and over-approximations of the function $\VaR_\alpha(r^Tx)$, {\black which will be the key in the approximation of the CVaR function}.
\begin{prop}\label{prop:VaR lower bound}
{\black Let $\alpha \in (0, 1)$.} Then,
$\max_{i=1,2} \{ {\VaR}_{\alpha/\rho_i}(r_{M,i}^Tx) \} \le \VaR_\alpha(r_M^Tx)$.
\end{prop}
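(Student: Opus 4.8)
The plan is to recast the claimed inequality entirely in terms of quantiles and then exploit the fact that the mixture CDF dominates a scaled copy of each component CDF. By the remark following Definition~\ref{dfn:VaR}, for any level $\beta \in (0,1)$ we have $\VaR_\beta(Z) = -q_\beta(Z)$, where $q_\beta(Z) := \inf\{y \in \mathbb{R} : \text{P}(Z \le y) \ge \beta\}$ is the $\beta$-quantile of $Z$. Writing $Y := r_M^T x$ and $Y_i := r_{M,i}^T x$ for $i=1,2$, and multiplying through by $-1$ (which reverses $\max$ and $\min$), the asserted bound $\max_{i}\VaR_{\alpha/\rho_i}(Y_i) \le \VaR_\alpha(Y)$ becomes equivalent to $q_\alpha(Y) \le q_{\alpha/\rho_i}(Y_i)$ holding for each $i$. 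So it suffices to establish this pointwise quantile inequality for a fixed $i$.

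First I would record the mixture identity for the CDF of $Y$, namely $F_Y = \rho_1 F_{Y_1} + \rho_2 F_{Y_2}$, which holds because $r_M$ realizes $r_{M,1}$ with probability $\rho_1$ and $r_{M,2}$ with probability $\rho_2$. Since the omitted term $\rho_{3-i} F_{Y_{3-i}}$ is nonnegative, this immediately gives the one-sided domination $F_Y(y) \ge \rho_i\, F_{Y_i}(y)$ for every $y \in \mathbb{R}$. This single inequality is what drives the entire argument.

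Next I would evaluate the domination at the component quantile. If $\alpha/\rho_i \ge 1$, then by the convention stated just before Proposition~\ref{prop:VaR lower bound} we have $\VaR_{\alpha/\rho_i}(Y_i) = -\infty$, which cannot be the maximizing term, so I may assume $\alpha/\rho_i \in (0,1)$. Setting $y^* := q_{\alpha/\rho_i}(Y_i)$ and using continuity of the normal CDF $F_{Y_i}$, one has $F_{Y_i}(y^*) = \alpha/\rho_i$, hence $F_Y(y^*) \ge \rho_i F_{Y_i}(y^*) = \alpha$. Because $q_\alpha(Y)$ is the infimum of $\{y : F_Y(y) \ge \alpha\}$, the membership $F_Y(y^*) \ge \alpha$ forces $q_\alpha(Y) \le y^* = q_{\alpha/\rho_i}(Y_i)$, which is precisely the required pointwise inequality. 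Taking the minimum over $i$ and negating recovers the stated bound.

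I expect the substantive content to be entirely captured by the elementary domination $F_Y \ge \rho_i F_{Y_i}$, so that no integration or use of the closed-form normal VaR formula is needed. The main obstacle is therefore purely in the bookkeeping: keeping the sign flip straight so that the $-1$ correctly turns the $\max$ over VaR into a $\min$ over quantiles and reverses the inequality, and cleanly disposing of the degenerate case $\alpha/\rho_i \ge 1$ via the $-\infty$ convention so that the maximum on the left remains well defined.
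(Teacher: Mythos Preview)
Your proposal is correct and follows essentially the same argument as the paper: both evaluate the mixture CDF at the point $-\VaR_{\alpha/\rho_i}(r_{M,i}^Tx)$, drop the nonnegative contribution of the other component to obtain $F_Y(y^*)\ge \rho_i F_{Y_i}(y^*)=\alpha$, and then read off the VaR comparison. Your write-up is a bit more explicit about the quantile/sign bookkeeping and the degenerate case $\alpha/\rho_i\ge 1$, but the mathematical content is identical.
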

\begin{proof}
For any $k \in \{1,2\}$, we have
\begin{equation*}
\begin{split}
\text{P} \left(r_M^Tx \le -   {\VaR}_{\alpha/\rho_k}(r_{M,k}^Tx)  \right)
=& \sum_{j=1}^2\rho_j \text{P} \left(r_{M,j}^Tx \le -  {\VaR}_{\alpha/\rho_k}(r_{M,k}^Tx)  \right) \\
\ge & \rho_k \text{P} \left(r_{M,k}^Tx \le - {\VaR}_{\alpha/\rho_k}(r_{M,k}^Tx) \right) \\
=& \alpha.
\end{split}
\end{equation*}
This implies that ${\VaR}_{\alpha/\rho_i}(r_{M,i}^Tx) \le {\VaR}_\alpha(r_M^Tx)$, hence the result follows.
\end{proof}
{\noindent We note that Proposition \ref{prop:VaR lower bound} is guaranteed to provide a non-trivial lower bound on $\VaR_\alpha(r_M^Tx)$ for $\alpha \in (0, 1/2)$, which is a very loose assumption.}

{\black Let us define the set
$
\tilde \Delta_m :=  \{\theta \in\mathbb{R}^m: \ \sum_{i=1}^m \theta_i = 1, \ \theta > 0  \}.
$ 
}
\begin{prop}\label{prop:VaR upper bound}
Let {\black  $\alpha \in (0, 1)$}, $\alpha \theta_i \le \rho_i$, $i=1,2$ and $\theta \in {\black \tilde \Delta_2}$. Then,
${\VaR}_\alpha(r_M^Tx) \le \max_{i=1,2} \{ {\VaR}_{\alpha \theta_i /\rho_i}(r_{M,i}^Tx) \} $.
\end{prop}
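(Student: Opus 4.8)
The plan is to run the same kind of argument as in the proof of Proposition~\ref{prop:VaR lower bound}, but in the ``reverse'' direction: instead of exhibiting a value at which the left-tail probability of $r_M^Tx$ is at least $\alpha$, I will exhibit one at which it is at most $\alpha$. Concretely, set
\[
W := \max_{i=1,2}\bigl\{{\VaR}_{\alpha\theta_i/\rho_i}(r_{M,i}^Tx)\bigr\}.
\]
The whole proof reduces to establishing the single inequality $\text{P}(r_M^Tx \le -W) \le \alpha$; once this is in hand, the definition of VaR (Definition~\ref{dfn:VaR}) yields ${\VaR}_\alpha(r_M^Tx) \le W$, because the return of a portfolio under a mixture of nondegenerate normals has an everywhere-positive density, hence a continuous and strictly increasing cdf, so a cdf value $\le\alpha$ at $-W$ forces the $\alpha$-quantile to lie at or above $-W$.

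First I would record that the hypotheses $\alpha\theta_i \le \rho_i$ and $\theta \in \tilde\Delta_2$ keep each level $\beta_i := \alpha\theta_i/\rho_i$ in $(0,1]$, so every ${\VaR}_{\beta_i}(r_{M,i}^Tx)$ is well defined (it equals $-\infty$ in the boundary case $\alpha\theta_i=\rho_i$, which only helps). Since $W \ge {\VaR}_{\beta_j}(r_{M,j}^Tx)$ for each $j$ and $r_{M,j}^Tx$ is a nondegenerate normal, monotonicity of its cdf gives
\[
\text{P}\bigl(r_{M,j}^Tx \le -W\bigr) \le \text{P}\Bigl(r_{M,j}^Tx \le -{\VaR}_{\beta_j}(r_{M,j}^Tx)\Bigr) = \beta_j = \frac{\alpha\theta_j}{\rho_j},
\]
with the estimate holding trivially (both sides $\le 1$) in the boundary case $\beta_j = 1$. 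Then I would condition on which mixture component is realized and sum these bounds:
\[
\text{P}(r_M^Tx \le -W) = \sum_{j=1}^{2}\rho_j\,\text{P}\bigl(r_{M,j}^Tx \le -W\bigr) \le \sum_{j=1}^{2}\rho_j\cdot\frac{\alpha\theta_j}{\rho_j} = \alpha\sum_{j=1}^{2}\theta_j = \alpha,
\]
the last equality being $\sum_i\theta_i = 1$ from $\theta\in\tilde\Delta_2$. This is exactly the inequality flagged above, and the proof concludes.

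I do not anticipate a genuine difficulty here: the argument is essentially the dual of Proposition~\ref{prop:VaR lower bound}, and the only places needing a little care are (i) checking that the admissible range of $\theta$ keeps all the VaR levels within $(0,1]$, so that the convention ${\VaR}_{\ge 1} = -\infty$ cannot spoil the bound, and (ii) the passage from $\text{P}(r_M^Tx \le -W)\le\alpha$ back to ${\VaR}_\alpha(r_M^Tx)\le W$, which is where continuity and strict monotonicity of the mixture cdf (equivalently, $x^T\Sigma^i x>0$, as holds for $x\in\Delta_n$ when $\Sigma^i\succ 0$) are invoked. Both are routine.
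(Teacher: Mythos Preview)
Your proof is correct and follows essentially the same approach as the paper: both define $W=\max_{i}\{{\VaR}_{\alpha\theta_i/\rho_i}(r_{M,i}^Tx)\}$, decompose $\text{P}(r_M^Tx\le -W)$ via the mixture, use $W\ge {\VaR}_{\alpha\theta_j/\rho_j}(r_{M,j}^Tx)$ to bound each term by $\alpha\theta_j/\rho_j$, and sum to $\alpha$. Your write-up is more careful than the paper's in checking that the levels $\alpha\theta_i/\rho_i$ stay in $(0,1]$ and in justifying the final passage from $\text{P}(r_M^Tx\le -W)\le\alpha$ to ${\VaR}_\alpha(r_M^Tx)\le W$ via continuity of the mixture cdf, but these are refinements of the same argument.
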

\begin{proof}
The result follows since we have
\begin{equation*}
\begin{split}
\text{P} \left(r_M^Tx \le - \max_{i=1,2} \{ {\VaR}_{\alpha \theta_i /\rho_i}(r_{M,i}^Tx) \} \right)
=& \sum_{j=1}^2\rho_j \text{P} \left(r_{M,j}^Tx \le - \max_{i=1,2} \{ {\VaR}_{\alpha  \theta_i /\rho_i}(r_{M,i}^Tx) \} \right) \\
\le& \sum_{j=1}^2\rho_j \text{P} \left(r_{M,j}^Tx \le -  {\VaR}_{\alpha  \theta_j /\rho_j}(r_{M,j}^Tx)  \right) \\
=& \sum_{j=1}^2\rho_j \alpha  \theta_j /\rho_j  \\
=& \alpha.
\end{split}
\end{equation*}
\end{proof} 
{\black \noindent We note that by restricting $\theta_i$ to be positive for each $i=1,2$, we eliminate the possibility of obtaining a trivial upper bound on $\VaR_\alpha(r_M^Tx)$ in Proposition \ref{prop:VaR upper bound}.}


{\black We now  give  an over-approximation of the function ${\CVaR}_\alpha(r^Tx)$.}
\begin{prop} \label{prop:cvar approx}
{\black Let $\alpha \in (0, \min_{i=1,2}\{\rho_i\})$}. Then,
${\CVaR}_\alpha(r_M^Tx) \le \sum_{i=1}^2 {\CVaR}_{\alpha /\rho_i}(r_{M,i}^Tx) $.
\end{prop}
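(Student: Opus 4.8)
The plan is to use the variational formula \eqref{eq:cvar generic min} for CVaR together with the upper bound on $\VaR_\alpha(r_M^Tx)$ from Proposition \ref{prop:VaR upper bound}. Concretely, I would fix a convenient choice of the weights $\theta$ in Proposition \ref{prop:VaR upper bound}; the natural choice is $\theta_i = \rho_i$, which is feasible precisely because $\alpha \le \min_{i=1,2}\{\rho_i\}$ guarantees $\alpha\theta_i = \alpha\rho_i \le \rho_i$ and $\theta = (\rho_1,\rho_2) \in \tilde\Delta_2$. With this choice, Proposition \ref{prop:VaR upper bound} gives $\VaR_\alpha(r_M^Tx) \le \max_{i=1,2}\{\VaR_{\alpha}(r_{M,i}^Tx)\}$; more to the point, for each fixed $k$ we have the cruder but more useful fact from the proof that $\VaR_\alpha(r_M^Tx) \le \VaR_{\alpha}(r_{M,k}^Tx)$ is \emph{not} quite what we want — instead I would work directly with the convex program.

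The cleaner route: in \eqref{eq:cvar generic min} write ${\CVaR}_\alpha(r_M^Tx) = \min_{c}\{c - \tfrac1\alpha \text{E}[(r_M^Tx + c)_-]\}$, and plug in the specific candidate $c = \sum_{i=1}^2 \VaR_{\alpha/\rho_i}(r_{M,i}^Tx)$ (or, alternatively, bound the mixture expectation termwise). Since $r_M^Tx$ is a mixture, $\text{E}[(r_M^Tx+c)_-] = \sum_{i=1}^2 \rho_i \text{E}[(r_{M,i}^Tx + c)_-]$, so the objective at this $c$ becomes $c - \tfrac1\alpha\sum_i \rho_i \text{E}[(r_{M,i}^Tx+c)_-]$. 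The idea is then to split $c$ as $c = \sum_i c_i$ with $c_i = \VaR_{\alpha/\rho_i}(r_{M,i}^Tx)$ and compare term by term against $\sum_i {\CVaR}_{\alpha/\rho_i}(r_{M,i}^Tx) = \sum_i \min_{c_i}\{c_i - \tfrac{\rho_i}{\alpha}\text{E}[(r_{M,i}^Tx+c_i)_-]\}$, where I have used that the $(\alpha/\rho_i)$-level CVaR of $r_{M,i}^Tx$ equals $\min_{c_i}\{c_i - \tfrac{1}{\alpha/\rho_i}\text{E}[(r_{M,i}^Tx+c_i)_-]\}$ and that $c_i$ achieves this minimum because it is the corresponding VaR. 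Summing these $i$-wise identities and noting $\tfrac{1}{\alpha/\rho_i} = \tfrac{\rho_i}{\alpha}$, the right-hand side $\sum_i {\CVaR}_{\alpha/\rho_i}(r_{M,i}^Tx)$ is exactly the value of the mixture objective $c - \tfrac1\alpha\sum_i\rho_i\text{E}[(r_{M,i}^Tx+c)_-]$ evaluated at this particular $c = \sum_i c_i$; since ${\CVaR}_\alpha(r_M^Tx)$ is the \emph{minimum} over all $c$, it is at most this value, which is the claim.

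I expect the main obstacle to be bookkeeping with the level parameters: one must check that $\alpha/\rho_i < 1$ so that $\VaR_{\alpha/\rho_i}(r_{M,i}^Tx)$ and ${\CVaR}_{\alpha/\rho_i}(r_{M,i}^Tx)$ are well defined and finite (this is exactly what the hypothesis $\alpha < \min_{i=1,2}\{\rho_i\}$ buys us), and that the Rockafellar–Uryasev identity \eqref{eq:cvar generic min} is being applied at the correct confidence level $\alpha/\rho_i$ for each normal component, with the matching weight $\rho_i/\alpha$ on the expectation term. A secondary subtlety is justifying that $c_i = \VaR_{\alpha/\rho_i}(r_{M,i}^Tx)$ is an exact minimizer in each one-dimensional program — this is the standard fact, recorded right after \eqref{eq:cvar generic min}, that the minimizer of the CVaR variational problem is the VaR, and it holds here because each $r_{M,i}^Tx$ is a (nondegenerate, or limiting) normal so its cdf is continuous. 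Once these points are pinned down, the inequality falls out of a single substitution into \eqref{eq:cvar generic min} with no further estimation needed; the positivity of all the component densities also makes the bound strict unless $\mu^1 = \mu^2$ and $\Sigma^1 = \Sigma^2$, though strictness is not needed for the statement.
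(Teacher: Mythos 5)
The pivotal step in your argument is false as stated: the sum $\sum_{i=1}^2 {\CVaR}_{\alpha/\rho_i}(r_{M,i}^Tx)$ equals $\sum_i \bigl(c_i - \tfrac{\rho_i}{\alpha}\text{E}[(r_{M,i}^Tx+c_i)_-]\bigr)$ with each expectation evaluated at its \emph{own} shift $c_i = \VaR_{\alpha/\rho_i}(r_{M,i}^Tx)$, whereas the mixture objective at $c=c_1+c_2$ is $c_1+c_2-\tfrac1\alpha\sum_i\rho_i\text{E}[(r_{M,i}^Tx+c_1+c_2)_-]$, with the \emph{same} shift $c_1+c_2$ inside both expectations. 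These two quantities are not equal in general, so the claim that the bound ``falls out of a single substitution with no further estimation needed'' does not hold. The route can be repaired: since $u\mapsto(u)_-$ is nondecreasing, $\text{E}[(r_{M,i}^Tx+c_1+c_2)_-]\ge\text{E}[(r_{M,i}^Tx+c_i)_-]$ \emph{provided} $c_1+c_2\ge c_i$ for each $i$, i.e.\ provided both component VaRs are nonnegative; then plugging $c=c_1+c_2$ into \eqref{eq:cvar generic min} gives ${\CVaR}_\alpha(r_M^Tx)\le c_1+c_2-\tfrac1\alpha\sum_i\rho_i\text{E}[(r_{M,i}^Tx+c_1+c_2)_-]\le\sum_i{\CVaR}_{\alpha/\rho_i}(r_{M,i}^Tx)$, using at the last step that $c_i$ attains the minimum in the $i$-th Rockafellar--Uryasev problem. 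The nonnegativity $c_i\ge 0$ is an extra condition not implied by $\alpha<\min_{i=1,2}\{\rho_i\}$ alone (it does hold for the small $\alpha$ used in practice), though it plays the same role as the condition $V\ge V_i\ge 0$ invoked in the paper's own proof, so it is not a disadvantage relative to the paper.

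For comparison, the paper argues directly with the tail integral: writing ${\CVaR}_\alpha(r_M^Tx)=-\tfrac1\alpha\int_{-\infty}^{-V}y\sum_i\rho_i f_i(y)\,dy$ with $V=\VaR_\alpha(r_M^Tx)$, it uses Proposition \ref{prop:VaR lower bound} (which gives $V\ge V_i$) to extend each component integral from $-V$ to $-V_i$, which only enlarges the nonpositive contribution. Your variational route, once repaired as above, is a legitimate alternative that avoids manipulating densities, but it genuinely requires the monotonicity estimate and the nonnegativity of the component VaRs that your write-up omits. Two smaller points: the choice $\theta_i=\rho_i$ in Proposition \ref{prop:VaR upper bound} that you set up is never actually used in your argument, and the closing remark that the bound is strict unless $\mu^1=\mu^2$ and $\Sigma^1=\Sigma^2$ is not justified (nor needed for the statement).
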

\begin{proof}
For notational purposes, let us define
$V_i :=\VaR_{\alpha/\rho_i} (r_{M,i}^T x)$. Note that $V \ge V_i$ for all $i=1,2$ due to Proposition~\ref{prop:VaR lower bound}. Let $f_i$ denote the pdf of the random variable $r_{M,i}^T x$.
Then, we have
\begin{subequations}
\begin{align*}
{{\CVaR}}_\alpha(r_M^Tx) =&-\frac1\alpha \int_{-\infty}^{-V} y \sum_{i=1}^2 \rho_i  f_i(y)dy 
=-\sum_{i=1}^2\frac1{\alpha/\rho_i} \int_{-\infty}^{-V} y  f_i(y)dy \\
\le&-\sum_{i=1}^2\frac1{\alpha/\rho_i} \int_{-\infty}^{-V_i} y  f_i(y)dy 
= \sum_{i=1}^2 {\CVaR}_{\alpha /\rho_i}(r_{M,i}^Tx) ,
\end{align*}
\end{subequations}
{\black where the inequality follows due to the fact that $V \ge V_i \ge 0$ for all $i=1,2$ and the integrals involved are nonpositive.}
\end{proof}
{\black 
\noindent We note that the requirement in Proposition \ref{prop:cvar approx} can be satisfied for risk averse investors by selecting a small enough $\alpha$. However, this condition can be restrictive in  cases when the decision maker is not very risk averse.
}

{ 
We also derive an explicit  under-approximation of ${\CVaR}_\alpha(r_M^Tx)$.
{\black
\begin{prop} \label{prop:cvar approx lower}
Let {$\alpha \in (0, 1)$}. Then,
$ \max_{i=1,2} \{{\CVaR}_{\alpha/\rho_i}(r_{M,i}^Tx)\}  \le {\CVaR}_\alpha(r_M^Tx) $.
\end{prop}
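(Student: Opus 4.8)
The cleanest route is to avoid the integral manipulations of Proposition~\ref{prop:cvar approx} and instead work from the Rockafellar--Uryasev variational representation \eqref{eq:cvar generic min}. It suffices to prove the one-sided bound ${\CVaR}_{\alpha/\rho_k}(r_{M,k}^Tx) \le {\CVaR}_\alpha(r_M^Tx)$ for an arbitrary fixed $k\in\{1,2\}$, and then take the maximum over $k$. (If $\alpha/\rho_k \ge 1$ the left-hand side is $-\infty$ by the stated convention and there is nothing to prove, so I may assume $\alpha/\rho_k<1$.)

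\textbf{Key steps.} First I would write, using \eqref{eq:cvar generic min} and the linearity of expectation over the two mixture components,
\[
{\CVaR}_\alpha(r_M^Tx) \;=\; \inf_{c\in\mathbb{R}}\Big\{\, c \;-\; \tfrac{1}{\alpha}\textstyle\sum_{i=1}^2 \rho_i\,\text{E}\big[(r_{M,i}^Tx + c)_-\big] \,\Big\}.
\]
Next, the crucial observation: since $(u)_- = \min\{0,u\}\le 0$, every term $\text{E}[(r_{M,i}^Tx+c)_-]$ is nonpositive, so dropping the terms with $i\neq k$ only \emph{increases} the bracketed expression. Hence, for every $c\in\mathbb{R}$,
\[
c \;-\; \tfrac{1}{\alpha}\textstyle\sum_{i=1}^2 \rho_i\,\text{E}\big[(r_{M,i}^Tx+c)_-\big] \;\ge\; c \;-\; \tfrac{\rho_k}{\alpha}\,\text{E}\big[(r_{M,k}^Tx+c)_-\big].
\]
Taking the infimum over $c$ on both sides preserves the inequality, and the right-hand infimum is exactly ${\CVaR}_{\alpha/\rho_k}(r_{M,k}^Tx)$ by \eqref{eq:cvar generic min} applied with confidence level $\alpha/\rho_k$ to the random variable $r_{M,k}^Tx$. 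This yields ${\CVaR}_\alpha(r_M^Tx)\ge{\CVaR}_{\alpha/\rho_k}(r_{M,k}^Tx)$; since $k$ was arbitrary, maximizing over $k=1,2$ gives the claim.

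\textbf{Main obstacle.} There is no computational difficulty here; the only thing to get right is the \emph{direction} of the inequality, i.e.\ noticing that (a) the discarded summands are nonpositive so discarding them enlarges the objective, and (b) pointwise domination of the minimands is preserved under taking infima. A secondary remark worth including: unlike the proof of Proposition~\ref{prop:cvar approx}, this argument needs no sign condition on $\VaR$; an alternative proof mirroring Proposition~\ref{prop:cvar approx} directly from \eqref{eq:cvar mixture analytic} is also possible, but it requires $V \ge V_i \ge 0$ (via Proposition~\ref{prop:VaR lower bound}) and truncating the relevant integrals, which is less transparent.
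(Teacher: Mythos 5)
Your proposal is correct and follows essentially the same route as the paper's proof: both rest on the Rockafellar--Uryasev representation \eqref{eq:cvar generic min}, split the expectation over the two mixture components, and exploit the nonpositivity of $(u)_-$ to discard the other component before identifying the remaining infimum as ${\CVaR}_{\alpha/\rho_k}(r_{M,k}^Tx)$. The only (cosmetic) difference is that you drop the nonpositive term pointwise and then take the infimum, whereas the paper first bounds the minimum of a sum by the sum of the minima and then bounds the second minimum by zero.
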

\begin{proof}
Applying the CVaR identity   \eqref{eq:cvar generic min} to the mixture random variable $r_M^Tx$ at the level $\alpha$, we have
\begin{subequations}
\begin{align*}
{\CVaR}_\alpha(r_M^Tx) =& \min_{c \in\mathbb{R}}\left\{ c - \frac1\alpha \text{E}[(r_M^Tx+c)_-]   \right\} \\
=& \min_{c \in\mathbb{R}}\left\{ c - \frac1\alpha \left( \rho_1 \text{E}[(r_{M,1}^Tx+c)_-] + \rho_2 \text{E}[(r_{M,2}^Tx+c)_-] \right)   \right\} \\
\ge& \min_{c \in\mathbb{R}}\left\{ c - \frac1{\alpha/\rho_1} \text{E}[(r_{M,1}^Tx+c)_-] \right \} + \min_{c \in\mathbb{R}}\left\{ -\frac{1}{\alpha/\rho_2} \text{E}[(r_{M,2}^Tx+c)_-]   \right\} \\
\ge& {\CVaR}_{\alpha/\rho_1}(r_{M,1}^Tx).
\end{align*}
\end{subequations}
Here, the first inequality follows from the fact that the minimum of the sum of two functions is at least the sum of the minimum of these functions while the second inequality follows from the definition of ${\CVaR}_{\alpha/\rho_1}(r_{M,1}^Tx)$ and the fact that $\text{E}[(r_{M,2}^Tx+c)_-] \le 0$.

Since the same argument can be repeated for $ {\CVaR}_{\alpha/\rho_2}(r_{M,2}^Tx)$, the statement of the proposition follows.
%
%
\end{proof}
}

%

{\black We now give a guarantee for the  approximation proposed   in Proposition \ref{prop:cvar approx}.}
\begin{prop} \label{prop:cvar approx guarantee}
{\black Let $\alpha \in (0, \min_{i=1,2}\{\rho_i\})$.} Then,
$
{\CVaR}_\alpha(r_M^Tx)
\le 
\sum_{i=1}^2 {\CVaR}_{\alpha /\rho_i}(r_{M,i}^Tx) 
\le \kappa {\CVaR}_\alpha(r_M^Tx),
$
where $\kappa = \frac{\sum_{i=1}^2 {\CVaR}_{\alpha /\rho_i}(r_{M,i}^Tx)}{\max_{i=1,2} \{{\CVaR}_{\alpha /\rho_i}(r_{M,i}^Tx)\}} \le 2 $.
\end{prop}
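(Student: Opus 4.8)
The plan is to assemble this two-sided estimate entirely from the approximation results already established, so essentially no new computation is required. The left inequality, $\CVaR_\alpha(r_M^Tx) \le \sum_{i=1}^2 \CVaR_{\alpha/\rho_i}(r_{M,i}^Tx)$, is exactly the statement of Proposition~\ref{prop:cvar approx}, valid under the standing hypothesis $\alpha \in (0, \min_{i=1,2}\{\rho_i\})$, so there is nothing further to do for that half.

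For the right inequality I would start from the very definition of $\kappa$, namely $\kappa = \big(\sum_{i=1}^2 \CVaR_{\alpha/\rho_i}(r_{M,i}^Tx)\big) / \big(\max_{i=1,2}\{\CVaR_{\alpha/\rho_i}(r_{M,i}^Tx)\}\big)$, which rearranges to $\sum_{i=1}^2 \CVaR_{\alpha/\rho_i}(r_{M,i}^Tx) = \kappa \max_{i=1,2}\{\CVaR_{\alpha/\rho_i}(r_{M,i}^Tx)\}$. Then I would invoke Proposition~\ref{prop:cvar approx lower}, which gives $\max_{i=1,2}\{\CVaR_{\alpha/\rho_i}(r_{M,i}^Tx)\} \le \CVaR_\alpha(r_M^Tx)$; multiplying this by $\kappa \ge 0$ and substituting yields $\sum_{i=1}^2 \CVaR_{\alpha/\rho_i}(r_{M,i}^Tx) \le \kappa\,\CVaR_\alpha(r_M^Tx)$, as claimed.

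It then remains to argue $\kappa \le 2$. Since the mixture has exactly two components, the numerator of $\kappa$ is a sum of two terms, each bounded above by the maximum of the two, so the whole sum is at most twice that maximum, i.e. $\kappa \le 2$ — provided both terms are nonnegative, so that the denominator really is the larger of the two and no cancellation inflates the ratio. Nonnegativity of each $\CVaR_{\alpha/\rho_i}(r_{M,i}^Tx)$ is already implicit in the proof of Proposition~\ref{prop:cvar approx}: under $\alpha < \min_i \rho_i$ one has $V_i := \VaR_{\alpha/\rho_i}(r_{M,i}^Tx) \ge 0$, and $\CVaR_{\alpha/\rho_i}(r_{M,i}^Tx) = -\frac{1}{\alpha/\rho_i} \int_{-\infty}^{-V_i} y\, f_i(y)\,dy$ is a positive multiple of the negative of an integral of $y \le -V_i \le 0$ against a nonnegative density, hence nonnegative (equivalently, $\CVaR_\beta(Z) \ge \VaR_\beta(Z) \ge 0$ for $Z$ normal and $\beta$ small). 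With both terms in $[0,\infty)$, the elementary bound $a+b \le 2\max\{a,b\}$ finishes the argument.

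The only genuine subtlety — and the spot I would be most careful about — is precisely this sign bookkeeping: every step that scales by $\kappa$ or that invokes $a+b \le 2\max\{a,b\}$ silently relies on the component CVaR's being nonnegative, which is exactly what the hypothesis $\alpha < \min_{i=1,2}\{\rho_i\}$ is buying us (each $\alpha/\rho_i < 1$, and in the financial regime of interest the associated quantiles are negative). I would therefore state the nonnegativity explicitly rather than leave it buried, and I would also remark that $\kappa$ is a problem-dependent quantity lying in $[1,2]$ generically, so the constant $2$ is merely the worst case and the over-approximation is typically much tighter, with $\kappa$ near $1$ whenever one regime's tail contribution dominates.
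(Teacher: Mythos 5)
Your proposal is correct and follows essentially the same route as the paper: the left inequality is exactly Proposition~\ref{prop:cvar approx}, the right inequality comes from writing the sum as $\kappa$ times the maximum and invoking Proposition~\ref{prop:cvar approx lower}, and $\kappa \le 2$ from bounding a two-term sum by twice its maximum. Your explicit sign bookkeeping (nonnegativity of each ${\CVaR}_{\alpha/\rho_i}(r_{M,i}^Tx)$ via $V_i \ge 0$) merely spells out what the paper compresses into ``$\kappa \le 2$ by definition,'' so it is the same argument, stated slightly more carefully.
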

\begin{proof}
The desired result follows {\black from Proposition \ref{prop:cvar approx lower}}  since
\begin{subequations}
\begin{align*}
\sum_{i=1}^2 {\CVaR}_{\alpha /\rho_i}(r_{M,i}^Tx) = & \left ( \frac{\sum_{i=1}^2 {\CVaR}_{\alpha /\rho_i}(r_{M,i}^Tx)}{\max_{i=1,2} \{{\CVaR}_{\alpha /\rho_i}(r_{M,i}^Tx)\}}  \right)  \left ( \max_{i=1,2} \{{\CVaR}_{\alpha /\rho_i}(r_{M,i}^Tx)\} \right ) \\
\le &  \kappa {\CVaR}_\alpha(r_M^Tx),
\end{align*}
\end{subequations}
where $\kappa \le 2$ by definition.
\end{proof}

We remark that Propositions \ref{prop:VaR lower bound}--{\black\ref{prop:cvar approx guarantee}} can be extended in a straightforward manner to the mixture of  $m$ random variables,  $m \ge 2$, that are not necessarily normal. {In particular, Proposition \ref{prop:VaR lower bound}  yields a non-trivial bound when $\alpha \in (0,1/m)$,  Proposition \ref{prop:cvar approx} remains valid if  $\alpha \in (0,  \min_{i=1,\dots,m}\{\rho_i\})$}, {\black and the approximation factor $\kappa$ in Proposition \ref{prop:cvar approx guarantee} satisfies the condition $\kappa \le m$.}

We note that the over-approximation in Proposition \ref{prop:cvar approx} has an approximation factor of $2$ in the worst case. In practical settings related to the application in this paper, the accuracy of this approximation is  higher as the contribution to the CVaR of the mixture distribution due to one of the normals (in particular, the one on the ``left'') is  larger than the other.

We also point out  that the approximation guarantee proven in Proposition \ref{prop:cvar approx guarantee}  is valid for \textit{any} portfolio vector $x$. Since we are primarily interested in the performance comparison of the optimal solutions obtained from the portfolio optimization problems with the exact and approximate CVaR objectives, we designed the following experiment to test the empirical behavior of the proposed approximation:
Let $x^*$ be {\black an} optimal solution of the exact CVaR minimization problem \eqref{eq:cvar mixture one-shot}, {\black which is  obtained by using the interior point solver IPOPT \cite{wachter}}. Also, let   $x'$ be {\black an} optimal solution of the approximate CVaR minimization problem
\begin{equation}
\min_{x \in \Delta_n} \left \{ \sum_{i=1}^2 {\CVaR}_{\alpha /\rho_i}(r_{M,i}^Tx)   \right \}, \label{eq:cvar mixture approx opt}
\end{equation}
{\black which is obtained by using the conic interior point solver MOSEK \cite{MOSEK} after transforming this optimization problem into an explicit SOCP.}

We use the 360-month S\&P dataset mentioned in Section \ref{sec:data coll} for this illustration. For each month $t=181, \dots, 360$, we estimate the parameters of the mixture distribution using  the historical return vectors in the interval $[t-H, t-1]$, where $H$ is a rolling horizon window. 
We compute the  ``percentage error" of each of the 180 pairs of solutions as
\[
e= 100 \times \left ( \frac{ {\CVaR}_\alpha(r_M^T x')}{ {\CVaR}_\alpha(r_M^T  x^*)} - 1 \right ).
\]
Finally, we report the arithmetic and geometric average for three different $H$ values, namely 60, 120 and 180,  in Table  \ref{tab:cvar approx}. We can see that the approximation is quite accurate, especially when $H$ is chosen as 120 or 180.
\begin{table}[h]
\centering
\begin{tabular}{|c|c|c|c|}
\hline
   Average &       $H=60$  &    $  H=120$  &     $ H=180 $ \\
\hline
Arithmetic &       2.40 &       0.45 &       0.76 \\
\hline
Geometric &       0.75 &       0.17 &       0.42 \\
\hline
\end{tabular}  
\caption{Average percentage error of the CVaR approximation.}\label{tab:cvar approx}
\end{table}

In Section \ref{sec:mixture dist}, we use the approximate CVaR minimization problem \eqref{eq:cvar mixture approx opt} in the adaptation of the Black-Litterman approach for the CVaR minimization under the mixture distribution.
}

\section{Incorporating Market Information into Risk Minimizing Portfolios}
\label{sec:combine}

In the previous section, we presented ways to obtain risk minimizing portfolios under different probabilistic models and risk measures. In this section, our  objective is to combine market information with these risk minimizing portfolios, which will be achieved through a BL-type approach. After reviewing the basic construction of the BL approach through Theil's mixed estimation procedure, we also summarize a modern interpretation via inverse optimization in Section \ref{sec:BL review}. Then, we extend the inverse optimization model from MV optimization to CVaR minimization under both normal and mixture distributions in Section \ref{sec:extend BL CVaR}.

\subsection{Review of the Black-Litterman Model}
\label{sec:BL review}
The classical BL model is driven by two factors: market equilibrium and investor views. Let us first discuss how market equilibrium is obtained through ``reverse optimization". Consider the MV optimization problem with $\mathcal{X} = \mathbb{R}^n$: 
\begin{equation}\label{eq:MMV}
\max_{x \in \mathbb{R}^n} \   \mu^T x - \delta x^T \Sigma x.
\end{equation}
Assuming that $\Sigma$ is estimated from the historical data as $\hat \Sigma$, $\delta$ is predetermined and the percentage market capitalization vector is given as $ x^\text{m}$, we can compute the ``implied returns" which induce this market as
\begin{equation}\label{eq:CAPM}
\Pi := 2\delta \hat \Sigma x^\text{m},
\end{equation}
by writing down the optimality conditions for the unconstrained optimization problem \eqref{eq:MMV}. This derivation is the basis of the capital asset pricing
model (CAPM).  In the absence of  any other views, an investor should invest proportional to $x^\text{m}$.

The second force that drives the portfolio away from the market equilibrium is the investor views. The BL model incorporates these views in the portfolio allocation  via a certain ``mixing" procedure. The basic construction is as follows: Suppose that the random return vector is distributed normally with mean $\mu$ and covariance matrix $\hat \Sigma$, that is,
$
r \sim \text{N}(\mu, \hat \Sigma)
$.
However, according to this model, $\mu$ itself is random with
\begin{equation}\label{eq:r random}
\mu \sim \text{N}(\Pi, \tau\hat \Sigma),
\end{equation}
where $\tau$ is chosen exogenously, and a smaller value implies strong confidence in the market equilibrium. The investor views are expressed with
linear equations of the form $P\mu = q$ with certain confidence level quantified by the covariance matrix $\Omega$. To be more precise, the BL model assumes that
\begin{equation}\label{eq:view random}
P\mu \sim \text{N}(q, \Omega),
\end{equation}
where $\Omega$ is a diagonal matrix (so that views are independent) with positive diagonal entries. Again, smaller values indicate stronger confidence in the views.

At this point, we remind the reader the solution of the generalized least squares problem, which is a key to obtain the BL estimates:
\begin{prop}\label{prop:gls}\cite{aitken1936}
Consider the generalized least squares problem $Ax = b + \epsilon$, where $\epsilon \sim \text{N}(0, \Omega)$ with $\Omega \succ 0$. Then,
\begin{equation*}\label{eq:solution GLS}
\hat x := (A^T\Omega^{-1}A)^{-1} A^T\Omega^{-1}b,
\end{equation*}
minimizes  the $|| \cdot ||_{ \Omega} $ norm of the error, where $|| v ||_{ \Omega} := \sqrt{v^T  \Omega^{-1} v} $.
\end{prop}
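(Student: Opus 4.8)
The plan is to reduce this generalized (weighted) least squares problem to an ordinary least squares problem by a change of variables, and then invoke the standard normal equations. Since $\Omega \succ 0$, its inverse $\Omega^{-1}$ is symmetric positive definite and therefore admits a factorization $\Omega^{-1} = L^T L$ for some invertible matrix $L$ (for instance a Cholesky factor or the symmetric square root). First I would rewrite the objective: for any $x$,
\[
\| Ax - b \|_{\Omega}^2 = (Ax-b)^T \Omega^{-1} (Ax-b) = (Ax-b)^T L^T L (Ax-b) = \| LAx - Lb \|_2^2 .
\]
Thus minimizing $\| \cdot \|_{\Omega}$ over $x$ is equivalent to minimizing the Euclidean norm of the residual of the unweighted system $(LA)x = Lb$.

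Next I would apply the classical least squares characterization: a minimizer of $\| (LA)x - Lb \|_2$ is exactly a solution of the normal equations $(LA)^T (LA) x = (LA)^T (Lb)$, that is,
\[
A^T L^T L A\, x = A^T L^T L\, b \qquad \Longleftrightarrow \qquad A^T \Omega^{-1} A\, x = A^T \Omega^{-1} b .
\]
Under the implicit full-column-rank hypothesis on $A$ (which is precisely what makes $A^T \Omega^{-1} A$ invertible, as the statement presupposes by writing its inverse), this linear system has the unique solution $\hat x = (A^T \Omega^{-1} A)^{-1} A^T \Omega^{-1} b$, which is therefore the unique minimizer.

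Alternatively, and perhaps more directly, I could argue by first-order conditions: the map $g(x) = (Ax-b)^T \Omega^{-1} (Ax-b)$ is a convex quadratic, since its Hessian $2 A^T \Omega^{-1} A$ is positive semidefinite (indeed positive definite under full column rank), so every stationary point is a global minimizer; computing $\nabla g(x) = 2 A^T \Omega^{-1} (Ax-b)$ and setting it to zero again yields the normal equations and hence $\hat x$. There is no genuine obstacle here, as this is the classical Aitken computation; the only point worth a word of care is the invertibility of $A^T \Omega^{-1} A$, which should be recorded as a standing assumption that $A$ has full column rank (equivalently, that the views encoded by $P\mu = q$ are linearly independent in the application that follows).
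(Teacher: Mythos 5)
Your argument is correct, and in fact the paper offers no proof of this proposition at all: it is stated as a classical result with a citation to Aitken (1936), so there is no in-paper argument to compare against. Both of your routes are sound --- the whitening reduction via a factorization $\Omega^{-1}=L^TL$ to an ordinary least squares problem, and the direct first-order-condition argument for the convex quadratic $g(x)=(Ax-b)^T\Omega^{-1}(Ax-b)$ --- and they lead to the same normal equations $A^T\Omega^{-1}Ax=A^T\Omega^{-1}b$. Your one point of care is also the right one: the formula for $\hat x$ presupposes that $A^T\Omega^{-1}A$ is invertible, i.e., that $A$ has full column rank, a hypothesis the paper leaves implicit (and which holds in its application, where the stacked system has the identity block $I$ among its rows).
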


In order to obtain an estimate for $\mu$, one can use  Theil's Mixed Estimation  \cite{theil1961, theil1963} procedure, which is just a corollary of Proposition  \ref{prop:gls}.
 In particular, we rewrite equations \eqref{eq:r random}--\eqref{eq:view random} as
\begin{equation}\label{eq:mixedBL}
\begin{bmatrix} I \\ P \end{bmatrix} \mu = \begin{bmatrix} \Pi \\ q \end{bmatrix} + \begin{bmatrix}  \epsilon_1 \\  \epsilon_2 \end{bmatrix}, \ \
\text{where} \ \
\epsilon_1 \sim \text{N}(0, \tau \hat \Sigma), \ \epsilon_2 \sim \text{N}(0,  \Omega).
\end{equation}
The  BL estimate for the mean return vector $\mu$ is obtained by
\begin{equation}\label{eq:mu BL}
\mu^\text{BL} = ((\tau \hat \Sigma)^{-1}+ P^T  \Omega^{-1}P)^{-1} ((\tau \hat \Sigma)^{-1} \Pi+P^T \Omega^{-1}q).
\end{equation}

Alternative derivations for $\mu^\text{BL} $ and $\Sigma^\text{BL}$ can be seen in \citeA{he1999, satchell2000} based on a Bayesian interpretation and in \citeA{bertsimas2012} via inverse optimization. Here, we will outline the latter approach since it has motivated the extension presented in the next section.
Recall that the optimality condition of the MV problem \eqref{eq:MMV}  can be written as
$
\mu = 2\delta \Sigma x^*
$.
In the inverse optimization problem, given a solution $x^*$, we search for the parameters that make this solution optimal. Assuming that $x^*=x^\text{m}$, $\delta$ is given and $\Sigma \succeq 0$, the solution of the inverse problem satisfies the following linear equation in $\mu$ and $\Sigma$:
\begin{equation}\label{eq:inv mkt}
\mu = 2\delta \Sigma x^\text{m}.
\end{equation}
This is the market equilibrium in the BL model. Similarly, investor views are expressed as linear equations in $\mu$ as
\begin{equation}\label{eq:inv view}
P\mu=q.
\end{equation}
In this approach, the aim is to solve the system \eqref{eq:inv mkt}-\eqref{eq:inv view} and $\Sigma \succeq 0$ simultaneously to obtain estimates for $\mu$ and $\Sigma$ in a certain least squares sense. In particular, one solves the following norm minimization problem:
\begin{equation}\label{eq:inv}
\min_{\mu, \Sigma \succeq 0} \begin{Vmatrix} \mu -2 \delta \Sigma x^\text{m} \\  P\mu - q \end{Vmatrix}
\end{equation}
The flexibility of this approach allows us to use different norms, which can lead to tractable conic programs. For instance, if the underlying norm is $|| v ||_{\bar \Omega} $ where $\bar \Omega =~  \begin{bmatrix} \tau \hat \Sigma & 0 \\ 0 & \Omega \end{bmatrix}$ and $\Sigma = \hat \Sigma$, then the optimal solution of the problem \eqref{eq:inv} is precisely $\mu^\text{BL}$ in \eqref{eq:mu BL}. Hence, the BL estimate is just a special case of the inverse optimization approach. Although this derivation allows one to specify views on  the variance estimate $\Sigma$ through, for instance, factor models, we will use historical variance estimates directly not to over-complicate our analysis.


\subsection{Extension of the Black-Litterman Model to CVaR Minimization Problems}
\label{sec:extend BL CVaR}
In this section, we extend the BL Model from MV optimization to CVaR minimization problems under both normal and mixture distributions. We use the inverse optimization framework in \citeA{bertsimas2012} outlined in the previous section, and also present an equivalent interpretation  in terms of  Theil's Mixed Estimation principle.

\subsubsection{Normal Distribution}

Let us consider the CVaR minimization problem under a normal distribution defined in \eqref{eq:cvar normal}, which can be explicitly stated as
\begin{equation}\label{eq:cvar normal exp}
\min \big \{  -\mu^T x + z \sqrt{x^T \Sigma x} :\ e^T x = 1, x \ge0  \big \},
\end{equation}
where $e$ is the vector of ones and $z := \frac{\phi\left( \Phi^{-1}(\alpha) \right)}{\alpha}$.
Let us associate dual variables $\lambda$ and $\gamma$ to the equality and inequality constraints in problem \eqref{eq:cvar normal exp}. Noting that the problem is a convex program,  first-order necessary and sufficient conditions are:
\begin{equation} \label{eq:cvar normal exp opt cond}
 -\mu + z \frac{\Sigma x}{\sqrt{x^T \Sigma x}}   - \lambda e - \gamma = 0,
\ e^T x = 1, \ x \ge 0 , \
\ \gamma_j x_j = 0, \ \gamma \ge 0.
\end{equation}

Let us now consider the inverse problem, where we will treat a market portfolio $x^{\text{m}}$ as an optimal primal solution to problem \eqref{eq:cvar normal exp}, and we will search for the parameters $\mu$, $\Sigma$ and dual variables $\lambda$, $\gamma$ that satisfy the optimality conditions in \eqref{eq:cvar normal exp opt cond}.
At this point, we will assume that the covariance matrix $\Sigma$ is estimated by the sample covariance matrix $\hat \Sigma$. There are two main reasons for this assumption: First, it is generally accepted in the financial engineering literature that the estimation of the covariance matrix is reasonably  accurate as opposed to the mean estimation. Second, this assumption allows us to have an inverse problem that is linear in the remaining unknowns.
A further simplification can be made by taking into account the fact that each sector is invested a positive amount in the market portfolio since the percentage market capitalization is positive for each sector, i.e.,  $x^{\text{m}} > 0$. Hence, we have that $\gamma = 0$. Therefore, the inverse problem is given below, where $\mu$ and $\lambda$ are the only remaining unknowns:
\begin{equation} \label{eq:cvar normal inverse}
\mu + \lambda e = \tilde \mu_N := z \frac{\hat \Sigma x^{\text{m}}}{\sqrt{{x^{\text{m}}}^T \hat  \Sigma x^{\text{m}}}} .
\end{equation}
We will call  \eqref{eq:cvar normal inverse} the market equilibrium equation under normal distribution.

In our approach, the investor views are expressed simply as
\begin{equation} \label{eq:cvar normal view}
\mu = \hat \mu ,
\end{equation}
where $\hat \mu$ is the sample average estimated from the data. We then solve the following optimization problem to obtain adjusted $\mu$ estimates
\begin{equation}\label{eq:cvar normal norm}
\min_{\mu, \lambda} \begin{Vmatrix} \mu + \lambda e - \tilde \mu_N \\  \mu - \hat \mu \end{Vmatrix}_{\hat\Omega},
\end{equation}
where ${\hat\Omega} = \begin{bmatrix} \tau \hat \Sigma & 0 \\ 0 & \hat \Sigma \end{bmatrix}$, with $\tau > 0$. Note that as $\tau$ goes to zero, we put more emphasis on the market equilibrium and investor views become less important. In the other extreme, as $\tau$ gets larger, then the investor views become dominant and we recover the sample estimate. We would like to point out that our model allows more general investor views than the one used above. For instance, similar to the original BL model, linear equations in terms of the mean return vector given as $P\mu=q$ can be incorporated into our model as well. {\black In this case, the following optimization problem can be solved to find adjusted $\mu$ estimates
\begin{equation*}
\min_{\mu, \lambda} \begin{Vmatrix} \mu + \lambda e - \tilde \mu_N \\  P\mu - q \end{Vmatrix}_{\bar\Omega},
\end{equation*}
where ${\bar\Omega} = \begin{bmatrix} \tau \hat \Sigma & 0 \\ 0 & \Omega \end{bmatrix}$ and the confidence of investor views is expressed as in \eqref{eq:view random}. 
}

Note that problem \eqref{eq:cvar normal norm} has a closed form solution. Below, we also provide an equivalent interpretation in terms of Theil's Mixed Estimation principle.
\begin{prop}
The solution of   \eqref{eq:cvar normal norm} is equivalent to the solution of the following mixed estimation problem:
\begin{equation*}\label{eq:mixedBL cvar normal}
\begin{bmatrix} I & e \\ I & 0 \end{bmatrix} \begin{bmatrix} \mu \\  \lambda\end{bmatrix}= \begin{bmatrix} \tilde \mu_N \\ \hat \mu \end{bmatrix} + \begin{bmatrix}  \epsilon_1 \\  \epsilon_2 \end{bmatrix}, \ \
\text{where} \ \
\epsilon_1 \sim \text{N}(0, \tau \hat \Sigma), \ \epsilon_2 \sim \text{N}(0, \hat \Sigma).
\end{equation*}
\end{prop}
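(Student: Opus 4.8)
The plan is to recognize problem~\eqref{eq:cvar normal norm} as a generalized least squares problem of exactly the form treated in Proposition~\ref{prop:gls}, and then read off the claimed equivalence with Theil's mixed estimation as an immediate corollary. First I would introduce the stacked quantities $A := \begin{bmatrix} I & e \\ I & 0 \end{bmatrix}$, $\zeta := \begin{bmatrix} \mu \\ \lambda \end{bmatrix}$, and $b := \begin{bmatrix} \tilde\mu_N \\ \hat\mu \end{bmatrix}$. With these identifications the vector inside the norm in~\eqref{eq:cvar normal norm} is precisely $A\zeta - b$ and the weight matrix is $\hat\Omega = \begin{bmatrix} \tau\hat\Sigma & 0 \\ 0 & \hat\Sigma \end{bmatrix}$, so that~\eqref{eq:cvar normal norm} reads $\min_{\zeta} \| A\zeta - b \|_{\hat\Omega}$.

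Next I would verify the two hypotheses that make Proposition~\ref{prop:gls} applicable and that guarantee a unique solution. Since $\tau > 0$ and $\hat\Sigma$ is positive definite (as is implicitly required for the norm $\|\cdot\|_{\hat\Omega}$ in~\eqref{eq:cvar normal norm} to be well defined), the block-diagonal matrix $\hat\Omega$ is positive definite. Moreover $A$ has full column rank: if $A\begin{bmatrix} v \\ t \end{bmatrix} = 0$, the bottom block gives $v = 0$ and then the top block gives $t\,e = 0$, hence $t = 0$; consequently $A^T \hat\Omega^{-1} A$ is invertible, so the minimizer is unique.

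Then, applying Proposition~\ref{prop:gls} with $\Omega = \hat\Omega$, the unique minimizer of~\eqref{eq:cvar normal norm} is $\hat\zeta = (A^T\hat\Omega^{-1}A)^{-1} A^T\hat\Omega^{-1} b$. By the very same proposition, this is exactly the estimator produced by Theil's mixed estimation applied to the stacked linear model $A\zeta = b + \epsilon$ with $\epsilon \sim \text{N}(0, \hat\Omega)$. Writing this model in block form recovers precisely the displayed system, with $\epsilon_1 \sim \text{N}(0, \tau\hat\Sigma)$ and $\epsilon_2 \sim \text{N}(0, \hat\Sigma)$, which finishes the argument.

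I do not expect a genuine obstacle here: once the block structure is spotted, the statement is essentially a restatement of Proposition~\ref{prop:gls}. The only place needing a little care is checking that $A$ has full column rank---equivalently, that the parametrization by $(\mu,\lambda)$ is not redundant---since this is what makes ``the solution'' of both problems well defined and lets the mixed-estimation formula apply verbatim.
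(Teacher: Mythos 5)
Your proposal is correct and follows exactly the route the paper intends: the paper states this proposition without a separate proof, treating it as an immediate consequence of Proposition~\ref{prop:gls} (and Theil's mixed estimation as its corollary) once problem~\eqref{eq:cvar normal norm} is recognized as a generalized least squares problem in the stacked variables $(\mu,\lambda)$. Your added check that the stacked matrix has full column rank and that $\hat\Omega$ is positive definite is a sensible bit of extra care, but it is the same argument.
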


\subsubsection{Mixture Distribution}
\label{sec:mixture dist}

Let us now consider the CVaR minimization problem under the mixture distribution defined in~\eqref{eq:cvar mixture one-shot}. 
{\black Our aim is again to analyze the corresponding inverse optimization problem, which might lead us to the BL extension under the mixture model. Unfortunately, the optimality conditions of this minimization problem yield highly nonlinear  expressions in terms of the  mean return parameters $\mu_1$ and $ \mu_2$. Therefore, it seems  very unlikely that we would be able to solve the related inverse optimization problem in closed form. 
We remark that this has been the main motivation behind the CVaR approximation developed in Section~\ref{sec:approx CVaR}.}
{Due to Proposition \ref{prop:cvar approx}, we have an explicit expression that over-approximates $ {\CVaR}_\alpha (r_M^T x)$, whose accuracy is demonstrated theoretically by Proposition \ref{prop:cvar approx guarantee} and empirically by the experiments summarized in Table \ref{tab:cvar approx}. Therefore, we will base our construction on the approximate CVaR minimization problem \eqref{eq:cvar mixture approx opt}.}
To be more precise, let us consider the following convex program
\begin{equation} \label{eq:cvar mixture exp}
\min \left \{  \sum_{i=1}^2  \left ( -{\mu^i}^T x + z_i {\black (\rho_i)} \sqrt{x^T \Sigma^i x} \right ) :  e^T x = 1, x \ge0 \right\},
\end{equation}
where $z_i {\black (\rho_i)} := \frac{\phi\left( \Phi^{-1}(\alpha/\rho_i) \right)}{\alpha/\rho_i}$.
Let us again associate dual variables $\lambda$ and $\gamma$ to the equality and inequality constraints, and write down the  first-order necessary and sufficient conditions as:
\begin{equation} \label{eq:cvar mixture exp opt cond}
\sum_{i=1}^2 \left( - \mu^i + z_i {\black (\rho_i)} \frac{\Sigma^i x}{\sqrt{x^T \Sigma^i x}} \right ) - \lambda e - \gamma = 0,
\ e^T x = 1, \ x \ge 0 , \
\ \gamma_j x_j = 0, \ \gamma \ge 0.
\end{equation}

Using the inverse optimization framework, we will again treat  a market portfolio $x^{\text{m}}$  as   an optimal primal solution to problem \eqref{eq:cvar mixture exp}, and we will search for the parameters $\mu^i$, $\Sigma^i$, {\black $\rho_i$}, $i=1,2$, and dual variables $\lambda$, $\gamma$ that satisfy the optimality conditions \eqref{eq:cvar mixture exp opt cond}.
{\black Since our preliminary experiments have demonstrated that the estimation of $\rho_i$'s is quite stable across different data subsets, we  make the assumption that  in addition to the covariance matrices $\Sigma^i$'s, the probabilities $\rho_i$'s  are also estimated from the historical data, using the technique developed in Section \ref{sec:two models}. We will denote these estimated probabilities as $\hat{\rho}_i$, $i=1,2$.}  
By the same reasoning as above, we have that $\gamma = 0$. Finally, the inverse problem is given below, where $\mu^i$, $i=1,2$, and $\lambda$ are the  remaining unknowns:
\begin{equation} \label{eq:cvar mixture inverse}
\sum_{i=1}^2 \mu^i + \lambda e = \tilde \mu_M := \sum_{i=1}^2 z_i {\black (\hat \rho_i)} \frac{\hat \Sigma^i  x^{\text{m}}}{\sqrt{{x^{\text{m}}}^T \hat  \Sigma^i x^{\text{m}}}} .
\end{equation}
We will call  \eqref{eq:cvar mixture inverse} the market equilibrium equation under a mixture distribution.

Similarly, the investor views are expressed  as
\begin{equation} \label{eq:cvar mixture view}
\mu^i = \hat \mu^i, \ i=1,2 ,
\end{equation}
where $\hat \mu^i$'s  are estimated from the historical data. 
We then solve the following optimization problem to obtain adjusted $\mu^i$ estimates
\begin{equation}\label{eq:cvar mixture norm}
\min_{\mu, \lambda} \begin{Vmatrix} \sum_{i=1}^2 \mu^i + \lambda e - \tilde \mu_M \\  \mu^1 - \hat \mu^1 \\  \mu^2 - \hat \mu^2  \end{Vmatrix}_{\hat\Omega},
\end{equation}
where ${\hat\Omega} = \begin{bmatrix} \tau \hat \Sigma & 0 & 0\\ 0 & \hat \Sigma^1 & 0 \\ 0 &0 & \hat \Sigma^2 \end{bmatrix}$, with $\tau > 0$.

We point out a difference between the two models under normal and mixture distributions. In the former case, as $\tau$ approaches zero, we recover the market portfolio. However, in the latter, we do not necessarily recover the market portfolio  due to added flexibility in the mixture model. On the other hand, as $\tau$ gets larger, the investor views become more important and we recover the estimates $\mu^i$, as before.

We provide an equivalent interpretation of the problem \eqref{eq:cvar mixture norm} below.
\begin{prop}
Solution of   \eqref{eq:cvar mixture norm} is equivalent to the solution of the following mixed estimation problem:
\begin{equation*}\label{eq:mixedBL cvar normal}
\begin{bmatrix} I & I & e \\ I &  0& 0 \\ 0 & I & 0 \end{bmatrix} \begin{bmatrix} \mu^1 \\ \mu^2 \\  \lambda\end{bmatrix}= \begin{bmatrix} \tilde \mu_M \\ \hat \mu^1 \\ \hat \mu^2 \end{bmatrix} + \begin{bmatrix}  \epsilon_1 \\  \epsilon_2 \\ \epsilon_3 \end{bmatrix}, \ \
\text{where} \ \
\epsilon_1 \sim \text{N}(0, \tau \hat \Sigma), \ \epsilon_2 \sim \text{N}(0, \hat \Sigma^1), \ \epsilon_3 \sim \text{N}(0, \hat \Sigma^2).
\end{equation*}
\end{prop}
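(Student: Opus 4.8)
The plan is to recognize problem~\eqref{eq:cvar mixture norm} as an instance of the generalized least squares problem of Proposition~\ref{prop:gls} and then invoke that proposition directly. First I would stack the unknowns into a single vector $\xi := (\mu^1,\mu^2,\lambda)$ and rewrite the objective of~\eqref{eq:cvar mixture norm} as $\| A\xi - b \|_{\hat\Omega}$, where $A := \begin{bmatrix} I & I & e \\ I & 0 & 0 \\ 0 & I & 0 \end{bmatrix}$, $b := (\tilde\mu_M, \hat\mu^1, \hat\mu^2)$, and $\hat\Omega = \mathrm{diag}(\tau\hat\Sigma,\hat\Sigma^1,\hat\Sigma^2)$ is exactly the block-diagonal weighting matrix appearing in~\eqref{eq:cvar mixture norm}. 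With this identification, minimizing the $\|\cdot\|_{\hat\Omega}$-norm of $A\xi - b$ is precisely the generalized least squares problem $A\xi = b + \epsilon$ with $\epsilon \sim \text{N}(0,\hat\Omega)$; because $\hat\Omega$ is block diagonal, writing $\epsilon = (\epsilon_1,\epsilon_2,\epsilon_3)$ splits this into the three independent noise terms $\epsilon_1 \sim \text{N}(0,\tau\hat\Sigma)$, $\epsilon_2 \sim \text{N}(0,\hat\Sigma^1)$, $\epsilon_3 \sim \text{N}(0,\hat\Sigma^2)$, which is exactly the mixed estimation system in the statement.

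Next I would verify the one hypothesis needed to apply Proposition~\ref{prop:gls}, namely that $A$ has full column rank, so that $A^T\hat\Omega^{-1}A$ is invertible and the solution $\hat\xi = (A^T\hat\Omega^{-1}A)^{-1}A^T\hat\Omega^{-1}b$ is well defined. This is immediate: if $A\xi = 0$, then the second block of rows forces $\mu^1 = 0$, the third block forces $\mu^2 = 0$, and then the first block gives $\lambda e = 0$, hence $\lambda = 0$. Combined with $\hat\Omega \succ 0$ (which holds since $\tau > 0$ and $\hat\Sigma,\hat\Sigma^1,\hat\Sigma^2$ are covariance matrices, taken positive definite), this yields $A^T\hat\Omega^{-1}A \succ 0$.

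With these two observations in place, Proposition~\ref{prop:gls} applies verbatim: the minimizer of~\eqref{eq:cvar mixture norm} is $\hat\xi = (A^T\hat\Omega^{-1}A)^{-1}A^T\hat\Omega^{-1}b$, which is by definition the (Theil) mixed estimate of the stacked system displayed in the statement, since the mixed estimate is nothing but the GLS estimator of that system. The one mild abuse to flag is that the ``$\min_{\mu,\lambda}$'' in~\eqref{eq:cvar mixture norm} is shorthand for minimization over $(\mu^1,\mu^2,\lambda)$; once this is made explicit the correspondence is exact. I do not expect a genuine obstacle here, as the argument is a direct specialization of the same reasoning that identifies the classical BL estimate~\eqref{eq:mu BL} with the inverse-optimization problem~\eqref{eq:inv}; the only thing requiring care is keeping the block structure of $A$ and $\hat\Omega$ aligned between the norm-minimization and mixed-estimation formulations.
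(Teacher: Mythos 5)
Your proposal is correct and follows exactly the reasoning the paper relies on: the paper states this proposition without a separate proof, treating it (as with the classical BL estimate earlier) as an immediate identification of the weighted norm-minimization \eqref{eq:cvar mixture norm} with the GLS/Theil mixed-estimation system via Proposition \ref{prop:gls}. Your additional verification that the stacked matrix has full column rank and that $\hat\Omega \succ 0$ is a sound (and slightly more careful) completion of that same argument.
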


\section{Computational Results}
\label{sec:comp}

In this section, we present the results of our computational experiments on the S\&P 500 dataset. We explain the experimental setting in Section \ref{sec:exp set}, including a proper definition of two types of market-based portfolios, our criterion for preferable portfolios and the rolling horizon based back-testing. In Section \ref{sec: risk min vs mkt}, we provide a comparison of these two market portfolios with three other risk minimizing portfolios obtained through the analytical approaches outlined in Section \ref{sec:risk-min}.  In Section \ref{sec:mkt vs BL}, we  apply the BL-approach proposed in Section \ref{sec:extend BL CVaR} for CVaR minimization and show that the resulting portfolios may dominate the market-based approach in terms of both expected return and risk measures. 
Finally, in Section \ref{sec:replication}, we conduct extensive simulations to replicate the results on a synthetic dataset.

\subsection{Experimental Setting}
\label{sec:exp set}

Let us start by explaining the details of our experimental setting.
We use the 360-month S\&P dataset mentioned in Section \ref{sec:data coll}. Five portfolios, two market-based and three risk minimizing, are constructed for each month $t=181, \dots, 360$, using only the historical return and market capitalization data in the interval $[t-H, t-1]$, where $H$ is a rolling horizon window. This procedure gives us 180 portfolios for each strategy. For a fixed strategy $S$, these portfolios, denoted by $x_S^t$, are evaluated using the return information at $R^{t}$, by simply computing $p_S^t := {R^t}^T x_S^t$. The performance of a portfolio construction strategy is evaluated  with respect to average reward, standard deviation and 1\% CVaR measures by using the dataset $\{p_S^t: \ t=181, \dots, 360\}$. {We also report two risk-adjusted performance measures: the ratio of average reward to standard deviation (also known as the Sharpe ratio) and 1\% CVaR.}

We now discuss how these five portfolios are obtained. We construct two market-based portfolios for $t=181, \dots, 360$ using the market capitalization information $M^t$:
\begin{itemize}
\item Last Market Portfolio ($\mathsf{LstM}$): We simply choose $x_{\text{LstM}}^t = M^{t-1}$.
\item Average Market Portfolio ($\mathsf{AvgM}$): We compute $x_{\text{AvgM}}^t = \frac{1}{H}\sum_{t'=t-H}^{t-1} M^{t'}$.
\end{itemize}
We construct  three risk minimizing portfolios for $t=181, \dots, 360$  using the returns $R^t$:
\begin{itemize}
\item Standard Deviation Minimizing Portfolio ($\mathsf{StDev}$): We obtain $x_{\text{StDev}}^t $ by solving \eqref{eq:stdev min}, where $\Sigma$ is estimated from the data.
\item CVaR Minimizing Portfolio under Normal Distribution ($\mathsf{CVaR\_N}$): We obtain $x_{\text{CVaR\_N}}^t $ by solving \eqref{eq:cvar normal}  with $\alpha=0.01$, where $\mu$ and $\Sigma$ are estimated from the data.
\item CVaR Minimizing Portfolio under Mixture Distribution   ($\mathsf{CVaR\_M}$): We obtain $x_{\text{CVaR\_M}}^t $ by solving \eqref{eq:cvar mixture one-shot} with $\alpha=0.01$, where $\rho_i$, $\mu^i$ and $\Sigma^i$ are estimated from the data using the EM algorithm.
\end{itemize}
We use MOSEK \cite{MOSEK} to solve problems  \eqref{eq:stdev min} and  \eqref{eq:cvar normal}, and IPOPT \cite{wachter} to solve problem \eqref{eq:cvar mixture one-shot}.


%
%
%
%

\vspace{-0mm}
\subsection{Market-Based  vs.  Risk Minimizing Portfolios}
\label{sec: risk min vs mkt}

We first compare the performance of two market-based and {\black three} risk minimizing portfolios with respect to different $H$ values in Figure \ref{fig:market vs risk-min}. We have several observations:  First and foremost, market-based approaches result in higher reward, higher risk portfolios compared to risk minimizing portfolios. An interesting observation is that $\mathsf{AvgM}$ performs better with increasing values of $H$ in terms of both average return and risk measures, and that it has a higher average return than $\mathsf{LstM}$ with typically slightly larger risk (except for CVaR when $H=180$). This suggests that historical data can be useful even in market-based portfolio construction. {We also observe that most of the risk minimizing portfolios, especially $\mathsf{StDev}$, $\mathsf{CVaR\_N}$ and $\mathsf{CVaR\_M}$, have better risk-adjusted performance compared to the market-based ones.}
%
%
%
%
%
%
%
%
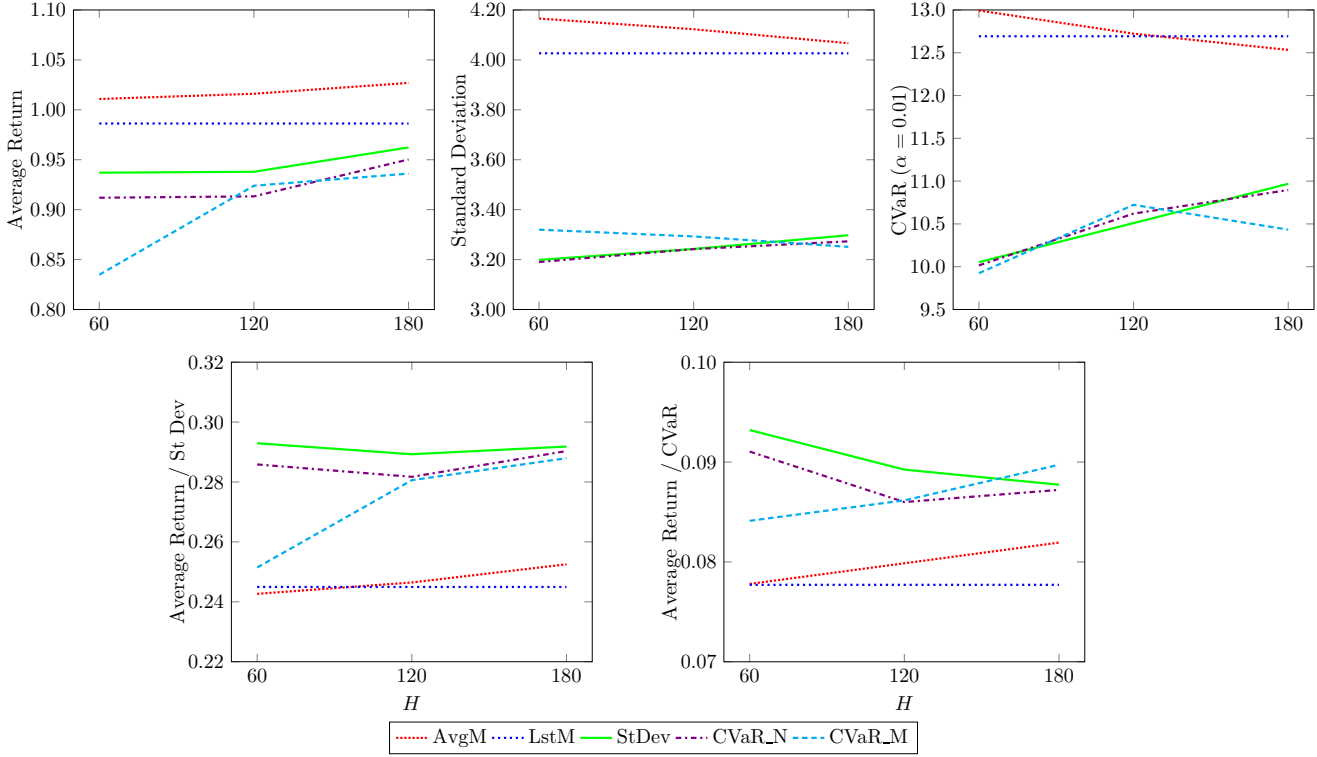
\begin{figure}[H]
\begin{tikzpicture}[scale=0.7]
\begin{groupplot}[
     group style = {group size = 3 by 2, horizontal sep=1.5cm}
]
\nextgroupplot[
    ylabel={Average Return},
    xmin=50, xmax=190,
    ymin=0.80, ymax=1.10,
    xtick={60, 120, 180},
    ytick={0.80, 0.85, 0.90, 0.95, 1.00, 1.05, 1.10},
  y tick label style={
    /pgf/number format/.cd,
    fixed,
    fixed zerofill,
    precision=2
  },
ylabel shift = -5 pt
]

\addplot[very thick,
    color=red, densely dotted, mark=, densely dotted,
    mark=,
    ]
    coordinates {
   (60,1.01083742498947)	(120,1.01606963234176)	(180,1.02692060315482)
    }; 

\addplot[very thick,
    color=blue, dotted, mark=, 
    mark=,
    ]
    coordinates {
   (60,0.986292519195984)	(120,0.986292519195984)	(180,0.986292519195984)
    }; 

\addplot[very thick,
    color=green,
    ]
    coordinates {
   (60,0.937040379778672)	(120,0.937948834235797)	(180,0.962302872529446)
    };

\addplot[very thick,
    color=violet, dashdotted, mark=,
    ]
    coordinates {
   (60,0.911936788874693)	(120,0.913323402771279)	(180,0.950258263096072)
    };

\addplot[very thick,
    color=cyan, densely dashed, mark=,
    ]
    coordinates {
(60,0.834778876833958)	(120,0.923964125950255)	(180,0.936126324997484)
    };

%

\nextgroupplot[
    ylabel={ Standard Deviation},
    xmin=50, xmax=190,
    ymin=3.00, ymax=4.20,
    xtick={60, 120, 180},
    ytick={3.00, 3.20, 3.40, 3.60, 3.80, 4.00, 4.20},
  y tick label style={
    /pgf/number format/.cd,
    fixed,
    fixed zerofill,
    precision=2
  },
ylabel shift = -5 pt,
    legend columns=7,
	legend style={at={(0.5,-0.2)},anchor=north},
]

\addplot[very thick,
    color=red, densely dotted, mark=,
    ]
    coordinates {
(60,4.16555756196506)	(120,4.1227319086464)	(180,4.06679998889105)
    }; 

\addplot[very thick,
    color=blue, dotted, mark=,
    ]
    coordinates {
(60,4.02632996543798)	(120,4.02632996543798)	(180,4.02632996543798)
    }; 

\addplot[very thick,
    color=green,
    ]
    coordinates {
(60,3.19883408388741)	(120,3.24251466666997)	(180,3.29751730299444)
    };

\addplot[very thick,
    color=violet, dashdotted, mark=,
    ]
    coordinates {
(60,3.19021019498835)	(120,3.24194918463558)	(180,3.27307419643006)
    };

\addplot[very thick,
    color=cyan, densely dashed, mark=,
    ]
    coordinates {
(60,3.31986916173798)	(120,3.29277256776533)	(180,3.2511214047799)
    };

%

\nextgroupplot[
    ylabel={CVaR ($\alpha=0.01$)},
    xmin=50, xmax=190,
    ymin=9.5, ymax=13.0,
    xtick={60, 120, 180},
    ytick={9.5,10,10.5,11,11.5,12,12.5,13},
  y tick label style={
    /pgf/number format/.cd,
    fixed,
    fixed zerofill,
    precision=1
  },
ylabel shift = -8 pt
]

\addplot[very thick,
    color=red, densely dotted, mark=,
    ]
    coordinates {
(60,12.9933299826985)	(120,12.7228370172321)	(180,12.5334045184918)
    }; 

\addplot[very thick,
    color=blue, dotted, mark=,
    ]
    coordinates {
(60,12.692639078874)	(120,12.692639078874)	(180,12.692639078874)
    }; 

\addplot[very thick,
    color=green,
    ]
    coordinates {
(60,10.0533696443415)	(120,10.5098244177614)	(180,10.9686110428349)
    };

\addplot[very thick,
    color=violet, dashdotted, mark=,
    ]
    coordinates {
(60,10.0154585615658)	(120,10.6207285437167)	(180,10.8957551997239)
    };

\addplot[very thick,
    color=cyan, densely dashed, mark=,
    ]
    coordinates {
(60,9.92411282013878)	(120,10.7232848804201)	(180,10.4332137576909)
    };

%

\nextgroupplot[
    xshift=3cm,
    xlabel={$H$},
    ylabel={Average Return / St Dev},
    xmin=50, xmax=190,
    ymin=0.22, ymax=0.32,
    xtick={60, 120, 180},
    ytick={0.22, 0.24, 0.26, 0.28, 0.30, 0.32},
  y tick label style={
    /pgf/number format/.cd,
    fixed,
    fixed zerofill,
    precision=2
  },
ylabel shift = -8 pt
]

\addplot[very thick,
    color=red, densely dotted, mark=,
    ]
    coordinates {
 (60,0.242665575964966)	(120,0.246455422000836)	(180,0.252513181361261)	
    }; 

\addplot[very thick,
    color=blue, dotted, mark=,
    ]
    coordinates {
   (60,0.244960678250993)	(120,0.244960678250993)	(180,0.244960678250993)	
    }; 

\addplot[very thick,
    color=green,
    ]
    coordinates {
(60,0.292931848043812)	(120,0.289265872526973)	(180,0.291826481594377)	
    };

\addplot[very thick,
    color=violet, dashdotted, mark=,
    ]
    coordinates {
(60,0.28585476602993)	(120,0.281720456045317)	(180,0.290325915658288)	
    };

\addplot[very thick,
    color=cyan, densely dashed, mark=,
    ]
    coordinates {
(60,0.251449330128704)	(120,0.280603687905877)	(180,0.287939516383843)	
    };

%

\nextgroupplot[
    xshift=4cm,
    xlabel={$H$},
    ylabel={Average Return / CVaR},
    xmin=50, xmax=190,
    ymin=0.07, ymax=0.10,
    xtick={60, 120, 180},
    ytick={0.07, 0.08, 0.09, 0.10},
  y tick label style={
    /pgf/number format/.cd,
    fixed,
    fixed zerofill,
    precision=2
  },
ylabel shift = -8 pt,
    legend columns=7,
	legend style={at={(-0.195,-0.2)},anchor=north},
]

\addplot[very thick,
    color=red, densely dotted, mark=,
    ]
    coordinates {
(60,0.0777966407637972)	(120,0.079861876008124)	(180,0.0819346891452917)		
    }; 
\addlegendentry{AvgM}

\addplot[very thick,
    color=blue, dotted, mark=,
    ]
    coordinates {
(60,0.0777058666103252)	(120,0.0777058666103252)	(180,0.0777058666103252)		
    }; 
\addlegendentry{LstM}

\addplot[very thick,
    color=green,
    ]
    coordinates {
(60,0.0932065976810155)	(120,0.0892449575704311)	(180,0.0877324274487842)		
    };
\addlegendentry{StDev}

\addplot[very thick,
    color=violet, dashdotted, mark=,
    ]
    coordinates {
(60,0.0910529241640756)	(120,0.0859944210994459)	(180,0.0872136208713786)		
    };
\addlegendentry{CVaR\_N}

\addplot[very thick,
    color=cyan, densely dashed, mark=,
    ]
    coordinates {
(60,0.0841162219699841)	(120,0.0861642804657128)	(180,0.0897255962293894)	
    };
\addlegendentry{CVaR\_M}

%

    \end{groupplot}
\end{tikzpicture}

\caption{Performance comparison of market-based vs. risk minimizing portfolios for different $H$ values. Vertical axes are in percentage.}
\label{fig:market vs risk-min}
\end{figure}

If we focus on the analytical approaches, we observe a general trend that larger $H$ values give rise to portfolios with a significantly higher average return, especially for $\mathsf{CVaR\_M}$, 
while the performances of $\mathsf{StDev}$ and $\mathsf{CVaR\_N}$ are less affected. The associated risks for the $\mathsf{StDev}$, $\mathsf{CVaR\_N}$ and $\mathsf{CVaR\_M}$ methods are more or less stable.
{Finally, we note that the risk-adjusted performance measures typically improve with larger $H$ values under the mixture model $\mathsf{CVaR\_M}$.

\subsection{Combining Risk Minimizing Portfolios with Market Portfolios via the Black-Litterman Approach}
\label{sec:mkt vs BL}

In the previous subsection, we had two main observations: (i) The risk minimizing approaches, in fact, do generate lower risk portfolios than market-based approaches, but also with a lower reward. (ii) The performance of all portfolios is better in terms of the average return when $H$ is large. In this subsection, we will focus on $H=180$, and apply the BL approach developed in Section~\ref{sec:extend BL CVaR}, which may result in portfolios with higher average return, lower  risk than the market-based portfolios.

Note that our previous results with $\mathsf{LstM}$, $\mathsf{AvgM}$ and $\mathsf{StDev}$
are not affected. They will serve as benchmarks in this subsection. We modify the other two portfolio construction methods as follows:

\begin{itemize}
\item  $\mathsf{CVaR\_N}$:
First, solve problem \eqref{eq:cvar normal norm} with respect to a given market capitalization $x^\text{m}$  to update the mean estimate $\mu$, and then solve problem \eqref{eq:cvar normal}  to obtain a BL portfolio.
\item  $\mathsf{CVaR\_M}$:
First, solve problem \eqref{eq:cvar mixture norm} with respect to a given market capitalization $x^\text{m}$  to update the mean estimates $\mu^i$, $i=1,2$, and then solve problem \eqref{eq:cvar mixture one-shot}  to obtain a BL portfolio.
\end{itemize}

\begin{figure}[H]
\begin{tikzpicture}[scale=0.7]
\begin{groupplot}[
     group style = {group size = 3 by 2, horizontal sep=1.5cm}
]
\nextgroupplot[
    ylabel={Average Return},
    xmin=0.0625, xmax=256,
    ymin=0.80, ymax=1.10,
    xtick={0.0625,  0.25,  1,  4, 16, 64, 256},
    xmode = log,
    log basis x=2,
    ytick={0.80, 0.85, 0.90, 0.95, 1.00, 1.05, 1.10},
  y tick label style={
    /pgf/number format/.cd,
    fixed,
    fixed zerofill,
    precision=2
  },
ylabel shift = -5 pt
]

\addplot[very thick,
    color=red, densely dotted, mark=,
    ]
    coordinates {
  (0.03125,1.02692060315482)	(0.0625,1.02692060315482)	(0.125,1.02692060315482)	(0.25,1.02692060315482)	(0.5,1.02692060315482)	(1,1.02692060315482)	(2,1.02692060315482)	(4,1.02692060315482)	(8,1.02692060315482)	(16,1.02692060315482)	(32,1.02692060315482)	(64,1.02692060315482)	(128,1.02692060315482)	(256,1.02692060315482)
    }; 

\addplot[very thick,
    color=blue, dotted, mark=,
    ]
    coordinates {
   (0.03125,0.986292519195984)	(0.0625,0.986292519195984)	(0.125,0.986292519195984)	(0.25,0.986292519195984)	(0.5,0.986292519195984)	(1,0.986292519195984)	(2,0.986292519195984)	(4,0.986292519195984)	(8,0.986292519195984)	(16,0.986292519195984)	(32,0.986292519195984)	(64,0.986292519195984)	(128,0.986292519195984)	(256,0.986292519195984)
    }; 

\addplot[very thick,
    color=green,
    ]
    coordinates {
   (0.03125,0.962302872529446)	(0.0625,0.962302872529446)	(0.125,0.962302872529446)	(0.25,0.962302872529446)	(0.5,0.962302872529446)	(1,0.962302872529446)	(2,0.962302872529446)	(4,0.962302872529446)	(8,0.962302872529446)	(16,0.962302872529446)	(32,0.962302872529446)	(64,0.962302872529446)	(128,0.962302872529446)	(256,0.962302872529446)
    };

\addplot[very thick,
    color=violet, dashdotted, mark=,
    ]
    coordinates {
   (0.03125,0.987588430960606)	(0.0625,0.988957488585163)	(0.125,0.9915753933091)	(0.25,0.996811777199706)	(0.5,1.00826284429715)	(1,0.995360708253909)	(2,0.977842587235364)	(4,0.965334758201825)	(8,0.956621139069237)	(16,0.953238776749783)	(32,0.951824247674738)	(64,0.951029594774229)	(128,0.950650922703385)	(256,0.950461260263492)
    };

\addplot[very thick,
    color=cyan, densely dashed, mark=,
    ]
    coordinates {
(0.03125,1.05562610363182)	(0.0625,1.05455160185037)	(0.125,1.05297343057421)	(0.25,1.04968521388508)	(0.5,1.04658261140533)	(1,1.04111915043152)	(2,1.02020659419801)	(4,0.991008106244282)	(8,0.967710405775277)	(16,0.953479292936577)	(32,0.944362400386483)	(64,0.939817180457363)	(128,0.937919782352262)	(256,0.937013426606585)
    };

%

\nextgroupplot[
    ylabel={Standard Deviation},
    xmin=0.0625, xmax=256,
    ymin=3.00, ymax=4.20,
    xtick={0.0625,  0.25,  1,  4, 16, 64, 256},
    xmode = log,
    log basis x=2,
    ytick={3.00, 3.20, 3.40, 3.60, 3.80, 4.00, 4.20},
  y tick label style={
    /pgf/number format/.cd,
    fixed,
    fixed zerofill,
    precision=2
  },
ylabel shift = -5 pt,
    legend columns=7,
	legend style={at={(0.5,-0.2)},anchor=north},
]

\addplot[very thick,
    color=red, densely dotted, mark=,
    ]
    coordinates {
(0.03125,4.06679998889105)	(0.0625,4.06679998889105)	(0.125,4.06679998889105)	(0.25,4.06679998889105)	(0.5,4.06679998889105)	(1,4.06679998889105)	(2,4.06679998889105)	(4,4.06679998889105)	(8,4.06679998889105)	(16,4.06679998889105)	(32,4.06679998889105)	(64,4.06679998889105)	(128,4.06679998889105)	(256,4.06679998889105)
    }; 

\addplot[very thick,
    color=blue, dotted, mark=,
    ]
    coordinates {
(0.03125,4.02632996543798)	(0.0625,4.02632996543798)	(0.125,4.02632996543798)	(0.25,4.02632996543798)	(0.5,4.02632996543798)	(1,4.02632996543798)	(2,4.02632996543798)	(4,4.02632996543798)	(8,4.02632996543798)	(16,4.02632996543798)	(32,4.02632996543798)	(64,4.02632996543798)	(128,4.02632996543798)	(256,4.02632996543798)
    }; 

\addplot[very thick,
    color=green,
    ]
    coordinates {
(0.03125,3.29751730299444)	(0.0625,3.29751730299444)	(0.125,3.29751730299444)	(0.25,3.29751730299444)	(0.5,3.29751730299444)	(1,3.29751730299444)	(2,3.29751730299444)	(4,3.29751730299444)	(8,3.29751730299444)	(16,3.29751730299444)	(32,3.29751730299444)	(64,3.29751730299444)	(128,3.29751730299444)	(256,3.29751730299444)
};

\addplot[very thick,
    color=violet, dashdotted, mark=,
    ]
    coordinates {
(0.03125,3.9398671373312)	(0.0625,3.86757300127053)	(0.125,3.7544424171383)	(0.25,3.60586443586132)	(0.5,3.47350147010741)	(1,3.38731991681144)	(2,3.33188718688703)	(4,3.30208406382403)	(8,3.28582642287496)	(16,3.27854203741762)	(32,3.27557874977694)	(64,3.27429529331945)	(128,3.27368814225987)	(256,3.27338048104396)
};

\addplot[very thick,
    color=cyan, densely dashed, mark=,
    ]
    coordinates {
(0.03125,3.69796030284234)	(0.0625,3.68229744755402)	(0.125,3.65244082077967)	(0.25,3.60087995674226)	(0.5,3.52411824344378)	(1,3.4424845555003)	(2,3.37811356820906)	(4,3.32276340787472)	(8,3.28922320606394)	(16,3.26803576139187)	(32,3.25843749910071)	(64,3.25444917792)	(128,3.25282318707778)	(256,3.25193460630771)
 };

%

\nextgroupplot[
    ylabel={CVaR ($\alpha=0.01$)},
    xmin=0.0625, xmax=256,
    ymin=9.5, ymax=13.0,
    xtick={0.0625,  0.25,  1,  4, 16, 64, 256},
    xmode = log,
    log basis x=2,
    ytick={9.5,10,10.5,11,11.5,12,12.5,13},
  y tick label style={
    /pgf/number format/.cd,
    fixed,
    fixed zerofill,
    precision=1
  },
ylabel shift = -8 pt
]

\addplot[very thick,
    color=red, densely dotted, mark=,
    ]
    coordinates {
(0.03125,12.5334045184918)	(0.0625,12.5334045184918)	(0.125,12.5334045184918)	(0.25,12.5334045184918)	(0.5,12.5334045184918)	(1,12.5334045184918)	(2,12.5334045184918)	(4,12.5334045184918)	(8,12.5334045184918)	(16,12.5334045184918)	(32,12.5334045184918)	(64,12.5334045184918)	(128,12.5334045184918)	(256,12.5334045184918)
  }; 

\addplot[very thick,
    color=blue, dotted, mark=,
    ]
    coordinates {
(0.03125,12.692639078874)	(0.0625,12.692639078874)	(0.125,12.692639078874)	(0.25,12.692639078874)	(0.5,12.692639078874)	(1,12.692639078874)	(2,12.692639078874)	(4,12.692639078874)	(8,12.692639078874)	(16,12.692639078874)	(32,12.692639078874)	(64,12.692639078874)	(128,12.692639078874)	(256,12.692639078874)
    }; 

\addplot[very thick,
    color=green,
    ]
    coordinates {
(0.03125,10.9686110428349)	(0.0625,10.9686110428349)	(0.125,10.9686110428349)	(0.25,10.9686110428349)	(0.5,10.9686110428349)	(1,10.9686110428349)	(2,10.9686110428349)	(4,10.9686110428349)	(8,10.9686110428349)	(16,10.9686110428349)	(32,10.9686110428349)	(64,10.9686110428349)	(128,10.9686110428349)	(256,10.9686110428349)
    };

\addplot[very thick,
    color=violet, dashdotted, mark=,
    ]
    coordinates {(0.03125,12.5739038239927)	(0.0625,12.4672154949281)	(0.125,12.2831211262528)	(0.25,11.9983523580055)	(0.5,11.6209555448891)	(1,11.3968346957502)	(2,11.1987534764914)	(4,11.0760510979246)	(8,10.9955943215479)	(16,10.9485561386363)	(32,10.9229311310913)	(64,10.9095757616347)	(128,10.9027057001538)	(256,10.8992442004181)
};

\addplot[very thick,
    color=cyan, densely dashed, mark=,
    ]
    coordinates {
(0.03125,12.1297625434927)	(0.0625,12.1021313149401)	(0.125,12.0490589780819)	(0.25,11.9508226152135)	(0.5,11.780647267364)	(1,11.5495991807744)	(2,11.2761850403259)	(4,10.9992615722073)	(8,10.7633906925871)	(16,10.5973346483573)	(32,10.5075032809768)	(64,10.4640507236575)	(128,10.4486182642334)	(256,10.4409999458568)
    };

%

\nextgroupplot[
    xshift=3cm,
    xlabel={$\tau$},
    ylabel={Average Return / St Dev},
    xmin=0.0625, xmax=256,
    ymin=0.22, ymax=0.32,
    xtick={0.0625,  0.25,  1,  4, 16, 64, 256},
    xmode = log,
    log basis x=2,
    ytick={0.22, 0.24, 0.26, 0.28, 0.30, 0.32},
  y tick label style={
    /pgf/number format/.cd,
    fixed,
    fixed zerofill,
    precision=2
  },
ylabel shift = -5 pt
]

\addplot[very thick,
    color=red, densely dotted, mark=,
    ]
    coordinates {
(0.03125,0.252513181361261)	(0.0625,0.252513181361261)	(0.125,0.252513181361261)	(0.25,0.252513181361261)	(0.5,0.252513181361261)	(1,0.252513181361261)	(2,0.252513181361261)	(4,0.252513181361261)	(8,0.252513181361261)	(16,0.252513181361261)	(32,0.252513181361261)	(64,0.252513181361261)	(128,0.252513181361261)	(256,0.252513181361261)
    }; 

\addplot[very thick,
    color=blue, dotted, mark=,
    ]
    coordinates {
(0.03125,0.244960678250993)	(0.0625,0.244960678250993)	(0.125,0.244960678250993)	(0.25,0.244960678250993)	(0.5,0.244960678250993)	(1,0.244960678250993)	(2,0.244960678250993)	(4,0.244960678250993)	(8,0.244960678250993)	(16,0.244960678250993)	(32,0.244960678250993)	(64,0.244960678250993)	(128,0.244960678250993)	(256,0.244960678250993)
    }; 

\addplot[very thick,
    color=green,
    ]
    coordinates {
(0.03125,0.291826481594377)	(0.0625,0.291826481594377)	(0.125,0.291826481594377)	(0.25,0.291826481594377)	(0.5,0.291826481594377)	(1,0.291826481594377)	(2,0.291826481594377)	(4,0.291826481594377)	(8,0.291826481594377)	(16,0.291826481594377)	(32,0.291826481594377)	(64,0.291826481594377)	(128,0.291826481594377)	(256,0.291826481594377)
    };

\addplot[very thick,
    color=violet, dashdotted, mark=,
    ]
    coordinates {
(0.03125,0.250665414983913)	(0.0625,0.255704931299366)	(0.125,0.264107231684458)	(0.25,0.27644183383217)	(0.5,0.29027275588456)	(1,0.293849040745719)	(2,0.29348010073203)	(4,0.292341060840197)	(8,0.291135627983731)	(16,0.290750817244549)	(32,0.29058200714593)	(64,0.290453214990907)	(128,0.290391412190881)	(256,0.290360764893535)
    };

\addplot[very thick,
    color=cyan, densely dashed, mark=,
    ]
    coordinates {
(0.03125,0.285461718672437)	(0.0625,0.286384143831417)	(0.125,0.288293084608948)	(0.25,0.291507972077674)	(0.5,0.296977155449417)	(1,0.302432482599828)	(2,0.302004824171404)	(4,0.298248170151285)	(8,0.294206365804311)	(16,0.291759136849373)	(32,0.289820627416397)	(64,0.288779184764539)	(128,0.288340228905849)	(256,0.288140304171271)
    };

%

\nextgroupplot[
    xshift=4cm,
    xlabel={$\tau$},
    ylabel={Average Return / CVaR},
    xmin=0.0625, xmax=256,
    ymin=0.07, ymax=0.10,
    xtick={0.0625,  0.25,  1,  4, 16, 64, 256},
    xmode = log,
    log basis x=2,
    ytick={0.07,0.08,0.09,0.10},
  y tick label style={
    /pgf/number format/.cd,
    fixed,
    fixed zerofill,
    precision=2
  },
ylabel shift = -5 pt,
    legend columns=7,
	legend style={at={(-0.195,-0.2)},anchor=north},
]

\addplot[very thick,
    color=red, densely dotted, mark=,
    ]
    coordinates {
(0.03125,0.0819346891452917)	(0.0625,0.0819346891452917)	(0.125,0.0819346891452917)	(0.25,0.0819346891452917)	(0.5,0.0819346891452917)	(1,0.0819346891452917)	(2,0.0819346891452917)	(4,0.0819346891452917)	(8,0.0819346891452917)	(16,0.0819346891452917)	(32,0.0819346891452917)	(64,0.0819346891452917)	(128,0.0819346891452917)	(256,0.0819346891452917)
    }; 
\addlegendentry{AvgM}

\addplot[very thick,
    color=blue, dotted, mark=,
    ]
    coordinates {
(0.03125,0.0777058666103252)	(0.0625,0.0777058666103252)	(0.125,0.0777058666103252)	(0.25,0.0777058666103252)	(0.5,0.0777058666103252)	(1,0.0777058666103252)	(2,0.0777058666103252)	(4,0.0777058666103252)	(8,0.0777058666103252)	(16,0.0777058666103252)	(32,0.0777058666103252)	(64,0.0777058666103252)	(128,0.0777058666103252)	(256,0.0777058666103252)
    }; 
\addlegendentry{LstM}

\addplot[very thick,
    color=green,
    ]
    coordinates {
(0.03125,0.0877324274487842)	(0.0625,0.0877324274487842)	(0.125,0.0877324274487842)	(0.25,0.0877324274487842)	(0.5,0.0877324274487842)	(1,0.0877324274487842)	(2,0.0877324274487842)	(4,0.0877324274487842)	(8,0.0877324274487842)	(16,0.0877324274487842)	(32,0.0877324274487842)	(64,0.0877324274487842)	(128,0.0877324274487842)	(256,0.0877324274487842)
};
\addlegendentry{StDev}

\addplot[very thick,
    color=violet, dashdotted, mark=,
    ]
    coordinates {
(0.03125,0.0785427059714068)	(0.0625,0.0793246486344517)	(0.125,0.0807266641041094)	(0.25,0.0830790551449855)	(0.5,0.0867624732236915)	(1,0.0873365925562711)	(2,0.0873170919681433)	(4,0.0871551376629805)	(8,0.0870004031700734)	(16,0.0870652499452328)	(32,0.0871400026468575)	(64,0.0871738384290505)	(128,0.0871940368609578)	(256,0.0872043274548369)
};
\addlegendentry{CVaR\_N}

\addplot[very thick,
    color=cyan, densely dashed, mark=,
    ]
    coordinates {
(0.03125,0.0870277633091949)	(0.0625,0.0871376763651973)	(0.125,0.0873905117810149)	(0.25,0.0878337205464686)	(0.5,0.0888391433554491)	(1,0.0901433144246754)	(2,0.0904744459717135)	(4,0.090097694262344)	(8,0.0899075796293219)	(16,0.0899735003729807)	(32,0.0898750516781845)	(64,0.0898138976269095)	(128,0.0897649582589163)	(256,0.0897436482583654)
 };
\addlegendentry{CVaR\_M}

%

    \end{groupplot}
\end{tikzpicture}

\caption{Performance comparison of market-based vs. risk minimizing portfolios after BL modification ($H=180$, $x^\text{m}=x_{\text{LstM}}$).}
\label{fig:market vs risk-min BL last}
\end{figure}
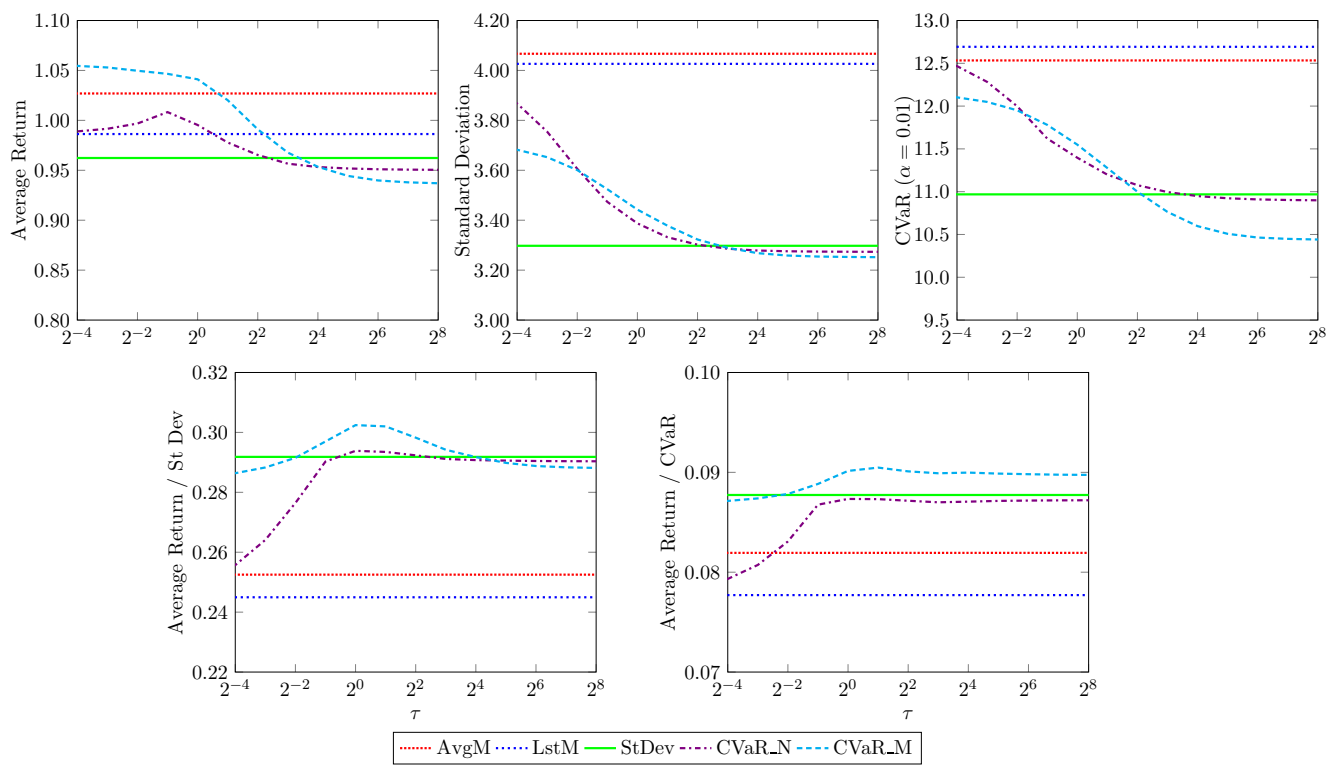

In Figure \ref{fig:market vs risk-min BL last}, we choose the last market portfolio as $x^\text{m}$ for each $t$. Although not shown in the figure, we verify that $\mathsf{CVaR\_N}$ matches $\mathsf{LstM}$ when $\tau = 0$. Interestingly, $\mathsf{CVaR\_N}$  outperforms $\mathsf{LstM}$ for $\tau \le 1$ in both average return and risk measures. Arguably, the best performing method is $\mathsf{CVaR\_M}$, which has higher average return and lower risk than $\mathsf{LstM}$ and $\mathsf{AvgM}$ when $\tau \le 4$ and $\tau \le 1$, respectively.  
{We also note the success of risk-minimizing portfolios in terms of both of the risk-adjusted performance measures, especially the  $\mathsf{CVaR\_M}$ approach.}  A final observation is that when $\tau$ is large enough, we can recover the risk minimizing portfolios as expected.

In Figure \ref{fig:market vs risk-min BL avg}, we choose the average market portfolio as $x^\text{m}$ for each $t$. We again verify that $\mathsf{CVaR\_N}$ matches $\mathsf{LstM}$ when $\tau = 0$. This time $\mathsf{CVaR\_N}$ does not perform better than $\mathsf{LstM}$ in terms of average return, but its performance is very close for $\tau \le 0.5$, with a sharp decrease in both risk measures. $\mathsf{CVaR\_M}$ is again the winner as it performs better than both market-based portfolios in similar ranges mentioned above with even better average returns. {It is also interesting to note that the $\mathsf{CVaR\_M}$ approach  outperforms the other portfolios in terms of the risk-adjusted measures for a large range of $\tau$ values considered.} 

\begin{figure}[h]
\begin{tikzpicture}[scale=0.7]
\begin{groupplot}[
     group style = {group size = 3 by 2, horizontal sep=1.5cm}
]
\nextgroupplot[
    ylabel={Average Return},
    xmin=0.0625, xmax=256,
    ymin=0.80, ymax=1.10,
    xtick={0.0625,  0.25,  1,  4, 16, 64, 256},
    xmode = log,
    log basis x=2,
    ytick={0.80, 0.85, 0.90, 0.95, 1.00, 1.05, 1.10},
  y tick label style={
    /pgf/number format/.cd,
    fixed,
    fixed zerofill,
    precision=2
  },
ylabel shift = -5 pt
]

\addplot[very thick,
    color=red, densely dotted, mark=,
    ]
    coordinates {
 (0.03125,1.02692060315482)	(0.0625,1.02692060315482)	(0.125,1.02692060315482)	(0.25,1.02692060315482)	(0.5,1.02692060315482)	(1,1.02692060315482)	(2,1.02692060315482)	(4,1.02692060315482)	(8,1.02692060315482)	(16,1.02692060315482)	(32,1.02692060315482)	(64,1.02692060315482)	(128,1.02692060315482)	(256,1.02692060315482)
    }; 

\addplot[very thick,
    color=blue, dotted, mark=,
    ]
    coordinates {
(0.03125,0.986292519195984)	(0.0625,0.986292519195984)	(0.125,0.986292519195984)	(0.25,0.986292519195984)	(0.5,0.986292519195984)	(1,0.986292519195984)	(2,0.986292519195984)	(4,0.986292519195984)	(8,0.986292519195984)	(16,0.986292519195984)	(32,0.986292519195984)	(64,0.986292519195984)	(128,0.986292519195984)	(256,0.986292519195984)
    }; 

\addplot[very thick,
    color=green,
    ]
    coordinates {
(0.03125,0.962302872529446)	(0.0625,0.962302872529446)	(0.125,0.962302872529446)	(0.25,0.962302872529446)	(0.5,0.962302872529446)	(1,0.962302872529446)	(2,0.962302872529446)	(4,0.962302872529446)	(8,0.962302872529446)	(16,0.962302872529446)	(32,0.962302872529446)	(64,0.962302872529446)	(128,0.962302872529446)	(256,0.962302872529446)
    };

\addplot[very thick,
    color=violet, dashdotted, mark=,
    ]
    coordinates {
(0.03125,1.02513784051358)	(0.0625,1.02397655534077)	(0.125,1.02255460176399)	(0.25,1.0213431606406)	(0.5,1.02069736514821)	(1,1.00434289466666)	(2,0.983660132524328)	(4,0.968595114390305)	(8,0.95848019613056)	(16,0.954274428903335)	(32,0.952375893394143)	(64,0.951307029197921)	(128,0.950792650654253)	(256,0.95053299605223)
    };

\addplot[very thick,
    color=cyan, densely dashed, mark=,
    ]
    coordinates {
(0.03125,1.07750731494944)	(0.0625,1.07678086038913)	(0.125,1.07520100490749)	(0.25,1.07245813356663)	(0.5,1.06407658354241)	(1,1.0508873275194)	(2,1.02754156962499)	(4,0.995501428666706)	(8,0.970454167858441)	(16,0.953993391065276)	(32,0.944679944287285)	(64,0.94015301006368)	(128,0.938092604692881)	(256,0.937107794993819)
    };

%

\nextgroupplot[
    ylabel={Standard Deviation},
    xmin=0.0625, xmax=256,
    ymin=3.00, ymax=4.20,
    xtick={0.0625,  0.25,  1,  4, 16, 64, 256},
    xmode = log,
    log basis x=2,
    ytick={3.00, 3.20, 3.40, 3.60, 3.80, 4.00, 4.20},
  y tick label style={
    /pgf/number format/.cd,
    fixed,
    fixed zerofill,
    precision=2
  },
ylabel shift = -5 pt,
    legend columns=7,
	legend style={at={(0.5,-0.2)},anchor=north},
]

\addplot[very thick,
    color=red, densely dotted, mark=,
    ]
    coordinates {
(0.03125,4.06679998889105)	(0.0625,4.06679998889105)	(0.125,4.06679998889105)	(0.25,4.06679998889105)	(0.5,4.06679998889105)	(1,4.06679998889105)	(2,4.06679998889105)	(4,4.06679998889105)	(8,4.06679998889105)	(16,4.06679998889105)	(32,4.06679998889105)	(64,4.06679998889105)	(128,4.06679998889105)	(256,4.06679998889105)
    }; 

\addplot[very thick,
    color=blue, dotted, mark=,
    ]
    coordinates {
(0.03125,4.02632996543798)	(0.0625,4.02632996543798)	(0.125,4.02632996543798)	(0.25,4.02632996543798)	(0.5,4.02632996543798)	(1,4.02632996543798)	(2,4.02632996543798)	(4,4.02632996543798)	(8,4.02632996543798)	(16,4.02632996543798)	(32,4.02632996543798)	(64,4.02632996543798)	(128,4.02632996543798)	(256,4.02632996543798)
    }; 

\addplot[very thick,
    color=green,
    ]
    coordinates {
(0.03125,3.29751730299444)	(0.0625,3.29751730299444)	(0.125,3.29751730299444)	(0.25,3.29751730299444)	(0.5,3.29751730299444)	(1,3.29751730299444)	(2,3.29751730299444)	(4,3.29751730299444)	(8,3.29751730299444)	(16,3.29751730299444)	(32,3.29751730299444)	(64,3.29751730299444)	(128,3.29751730299444)	(256,3.29751730299444)
};

\addplot[very thick,
    color=violet, dashdotted, mark=,
    ]
    coordinates {
(0.03125,3.97289773935135)	(0.0625,3.89443251264973)	(0.125,3.77153792691801)	(0.25,3.61083037101867)	(0.5,3.46887405650629)	(1,3.37921343811898)	(2,3.32544736544883)	(4,3.29772492688074)	(8,3.28326306637338)	(16,3.27717109844571)	(32,3.27487295800243)	(64,3.27393495648327)	(128,3.27350718754997)	(256,3.27329078477984)
};

\addplot[very thick,
    color=cyan, densely dashed, mark=,
    ]
    coordinates {
(0.03125,3.71541897003727)	(0.0625,3.69816956339538)	(0.125,3.66560663061532)	(0.25,3.61276677699696)	(0.5,3.52893079908675)	(1,3.43733429703849)	(2,3.36813272115031)	(4,3.31510426214686)	(8,3.2829441034543)	(16,3.26422824959259)	(32,3.2567663139499)	(64,3.25363311104044)	(128,3.25235299559965)	(256,3.25169941303679)
 };

%

\nextgroupplot[
    xlabel={$\tau$},
    ylabel={CVaR ($\alpha=0.01$)},
    xmin=0.0625, xmax=256,
    ymin=9.5, ymax=13.0,
    xtick={0.0625,  0.25,  1,  4, 16, 64, 256},
    xmode = log,
    log basis x=2,
    ytick={9.5,10,10.5,11,11.5,12,12.5,13},
  y tick label style={
    /pgf/number format/.cd,
    fixed,
    fixed zerofill,
    precision=1
  },
ylabel shift = -8 pt
]

\addplot[very thick,
    color=red, densely dotted, mark=,
    ]
    coordinates {
(0.03125,12.5334045184918)	(0.0625,12.5334045184918)	(0.125,12.5334045184918)	(0.25,12.5334045184918)	(0.5,12.5334045184918)	(1,12.5334045184918)	(2,12.5334045184918)	(4,12.5334045184918)	(8,12.5334045184918)	(16,12.5334045184918)	(32,12.5334045184918)	(64,12.5334045184918)	(128,12.5334045184918)	(256,12.5334045184918)
  }; 

\addplot[very thick,
    color=blue, dotted, mark=,
    ]
    coordinates {
(0.03125,12.692639078874)	(0.0625,12.692639078874)	(0.125,12.692639078874)	(0.25,12.692639078874)	(0.5,12.692639078874)	(1,12.692639078874)	(2,12.692639078874)	(4,12.692639078874)	(8,12.692639078874)	(16,12.692639078874)	(32,12.692639078874)	(64,12.692639078874)	(128,12.692639078874)	(256,12.692639078874)
    }; 

\addplot[very thick,
    color=green,
    ]
    coordinates {
(0.03125,10.9686110428349)	(0.0625,10.9686110428349)	(0.125,10.9686110428349)	(0.25,10.9686110428349)	(0.5,10.9686110428349)	(1,10.9686110428349)	(2,10.9686110428349)	(4,10.9686110428349)	(8,10.9686110428349)	(16,10.9686110428349)	(32,10.9686110428349)	(64,10.9686110428349)	(128,10.9686110428349)	(256,10.9686110428349)
    };

\addplot[very thick,
    color=violet, dashdotted, mark=,
    ]
    coordinates {(0.03125,12.4207315955228)	(0.0625,12.3195491385062)	(0.125,12.1452137636598)	(0.25,11.8765032738406)	(0.5,11.571028324409)	(1,11.3581123878368)	(2,11.1690531310267)	(4,11.0558924742213)	(8,10.9844426795969)	(16,10.9426303341016)	(32,10.9198775945883)	(64,10.9080242386748)	(128,10.9019304603668)	(256,10.8988661053918)
};

\addplot[very thick,
    color=cyan, densely dashed, mark=,
    ]
    coordinates {
(0.03125,12.0324311330812)	(0.0625,11.9996685441525)	(0.125,11.9370040527659)	(0.25,11.8482230877485)	(0.5,11.7047889324648)	(1,11.5033312154933)	(2,11.2313417986121)	(4,10.9457101467933)	(8,10.7273546257243)	(16,10.5757361045883)	(32,10.4983071519241)	(64,10.4590251204756)	(128,10.4452335014022)	(256,10.4392835494423)
    };

%

\nextgroupplot[
    xshift=3cm,
    xlabel={$\tau$},
    ylabel={Average Return / St Dev},
    xmin=0.0625, xmax=256,
    ymin=0.22, ymax=0.32,
    xtick={0.0625,  0.25,  1,  4, 16, 64, 256},
    xmode = log,
    log basis x=2,
    ytick={0.22, 0.24, 0.26, 0.28, 0.30, 0.32},
  y tick label style={
    /pgf/number format/.cd,
    fixed,
    fixed zerofill,
    precision=2
  },
ylabel shift = -5 pt
]

\addplot[very thick,
    color=red, densely dotted, mark=,
    ]
    coordinates {
(0.03125,0.252513181361261)	(0.0625,0.252513181361261)	(0.125,0.252513181361261)	(0.25,0.252513181361261)	(0.5,0.252513181361261)	(1,0.252513181361261)	(2,0.252513181361261)	(4,0.252513181361261)	(8,0.252513181361261)	(16,0.252513181361261)	(32,0.252513181361261)	(64,0.252513181361261)	(128,0.252513181361261)	(256,0.252513181361261)
    }; 

\addplot[very thick,
    color=blue, dotted, mark=,
    ]
    coordinates {
(0.03125,0.244960678250993)	(0.0625,0.244960678250993)	(0.125,0.244960678250993)	(0.25,0.244960678250993)	(0.5,0.244960678250993)	(1,0.244960678250993)	(2,0.244960678250993)	(4,0.244960678250993)	(8,0.244960678250993)	(16,0.244960678250993)	(32,0.244960678250993)	(64,0.244960678250993)	(128,0.244960678250993)	(256,0.244960678250993)
    }; 

\addplot[very thick,
    color=green,
    ]
    coordinates {
(0.03125,0.291826481594377)	(0.0625,0.291826481594377)	(0.125,0.291826481594377)	(0.25,0.291826481594377)	(0.5,0.291826481594377)	(1,0.291826481594377)	(2,0.291826481594377)	(4,0.291826481594377)	(8,0.291826481594377)	(16,0.291826481594377)	(32,0.291826481594377)	(64,0.291826481594377)	(128,0.291826481594377)	(256,0.291826481594377)
    };

\addplot[very thick,
    color=violet, dashdotted, mark=,
    ]
    coordinates {
(0.03125,0.25803277802991)	(0.0625,0.262933444607073)	(0.125,0.271124040531548)	(0.25,0.28285548078861)	(0.5,0.294244572884902)	(1,0.297212032639679)	(2,0.295797835426442)	(4,0.293716163678479)	(8,0.291929149981051)	(16,0.29118846719841)	(32,0.290813080570631)	(64,0.290569923301035)	(128,0.290450760050375)	(256,0.29039063699199)
   };

\addplot[very thick,
    color=cyan, densely dashed, mark=,
    ]
    coordinates {
(0.03125,0.29000963919249)	(0.0625,0.291165897596245)	(0.125,0.293321437146954)	(0.25,0.296852301785749)	(0.5,0.301529455839084)	(1,0.305727414533063)	(2,0.305077517632398)	(4,0.300292645402959)	(8,0.295604840434942)	(16,0.292256949612621)	(32,0.29006684951293)	(64,0.288954832329893)	(128,0.288435051780079)	(256,0.288190166420901)
    };

%

\nextgroupplot[
    xshift=4cm,
    xlabel={$\tau$},
    ylabel={Average Return / CVaR},
    xmin=0.0625, xmax=256,
    ymin=0.07, ymax=0.10,
    xtick={0.0625,  0.25,  1,  4, 16, 64, 256},
    xmode = log,
    log basis x=2,
    ytick={0.07,0.08,0.09,0.10},
  y tick label style={
    /pgf/number format/.cd,
    fixed,
    fixed zerofill,
    precision=2
  },
ylabel shift = -5 pt,
    legend columns=7,
	legend style={at={(-0.195,-0.2)},anchor=north},
]

\addplot[very thick,
    color=red, densely dotted, mark=,
    ]
    coordinates {
(0.03125,0.0819346891452917)	(0.0625,0.0819346891452917)	(0.125,0.0819346891452917)	(0.25,0.0819346891452917)	(0.5,0.0819346891452917)	(1,0.0819346891452917)	(2,0.0819346891452917)	(4,0.0819346891452917)	(8,0.0819346891452917)	(16,0.0819346891452917)	(32,0.0819346891452917)	(64,0.0819346891452917)	(128,0.0819346891452917)	(256,0.0819346891452917)
    }; 
\addlegendentry{AvgM}

\addplot[very thick,
    color=blue, dotted, mark=,
    ]
    coordinates {
(0.03125,0.0777058666103252)	(0.0625,0.0777058666103252)	(0.125,0.0777058666103252)	(0.25,0.0777058666103252)	(0.5,0.0777058666103252)	(1,0.0777058666103252)	(2,0.0777058666103252)	(4,0.0777058666103252)	(8,0.0777058666103252)	(16,0.0777058666103252)	(32,0.0777058666103252)	(64,0.0777058666103252)	(128,0.0777058666103252)	(256,0.0777058666103252)
    }; 
\addlegendentry{LstM}

\addplot[very thick,
    color=green,
    ]
    coordinates {
(0.03125,0.0877324274487842)	(0.0625,0.0877324274487842)	(0.125,0.0877324274487842)	(0.25,0.0877324274487842)	(0.5,0.0877324274487842)	(1,0.0877324274487842)	(2,0.0877324274487842)	(4,0.0877324274487842)	(8,0.0877324274487842)	(16,0.0877324274487842)	(32,0.0877324274487842)	(64,0.0877324274487842)	(128,0.0877324274487842)	(256,0.0877324274487842)
};
\addlegendentry{StDev}

\addplot[very thick,
    color=violet, dashdotted, mark=,
    ]
    coordinates {
(0.03125,0.0825344169648673)	(0.0625,0.0831180219201538)	(0.125,0.0841940390397753)	(0.25,0.0859969586241961)	(0.5,0.0882114654403755)	(1,0.0884251590732799)	(2,0.0880701453368327)	(4,0.0876089484995218)	(8,0.0872579723968056)	(16,0.0872070425270088)	(32,0.0872148872681617)	(64,0.087211671736576)	(128,0.0872132375188773)	(256,0.0872139346295841)
};
\addlegendentry{CVaR\_N}

\addplot[very thick,
    color=cyan, densely dashed, mark=,
    ]
    coordinates {
(0.03125,0.0895502582173116)	(0.0625,0.0897342169433381)	(0.125,0.0900729362371589)	(0.25,0.0905163690474052)	(0.5,0.0909095063295888)	(1,0.0913550438419098)	(2,0.0914887631459998)	(4,0.0909490033370153)	(8,0.0904653758281918)	(16,0.0902058619495417)	(32,0.0899840260545384)	(64,0.0898891626355448)	(128,0.0898105920338639)	(256,0.0897674433839746)
 };
\addlegendentry{CVaR\_M}

%
    \end{groupplot}
\end{tikzpicture}

\caption{Performance comparison of market-based vs. risk minimizing portfolios after BL modification ($H=180$, $x^\text{m}=x_{\text{AvgM}}$).}
\label{fig:market vs risk-min BL avg}
\end{figure}
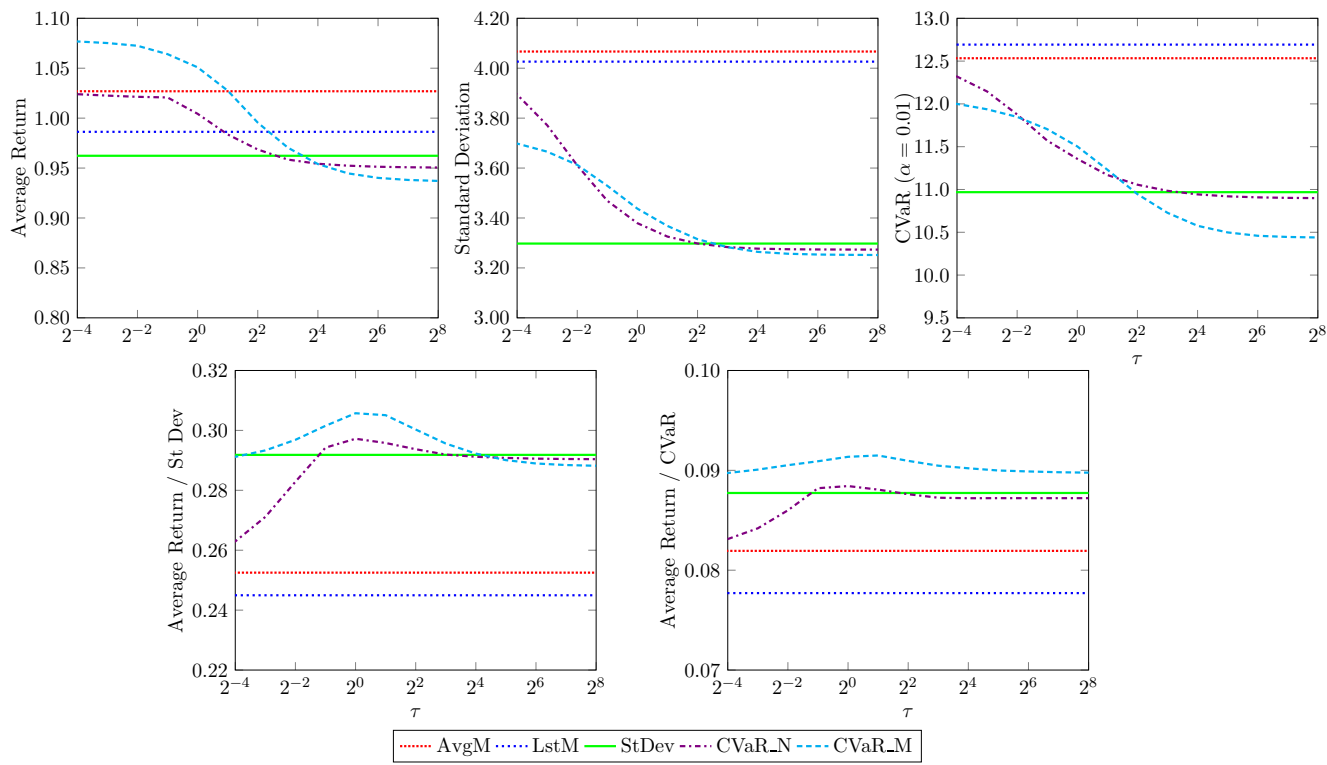

Our general conclusion regarding the experiments in this subsection is that  one can benefit from the BL modification the most if the market portfolio is chosen according to the average percentage  market capitalization and the value of $\tau$ is chosen small enough, for instance, close to $1$. This typically results in portfolios with similar average return and lower risk compared to market portfolios. Such a strategy may even lead to portfolios that dominate the market-based portfolios when the $\mathsf{CVaR\_M}$ method is used. {Moreover, the risk-minimizing portfolio constructions methods, especially the $\mathsf{CVaR\_M}$ approach, typically yield portfolios which are better in terms of the risk-adjusted measures.}

Finally, we would like to discuss the choice of the rolling horizon window $H$ in our results. When we repeated the above experiments with $H=60$ and $H=120$, we observed that $\mathsf{CVaR\_N}$ and $\mathsf{CVaR\_M}$ consistently give portfolios with similar average returns and lower risk compared to the market portfolios for small $\tau$. 
{We again observe that the risk-minimizing portfolios have better risk-adjusted performance compared to the market-based portfolios. However, it is interesting to note that the methods based on the mixture distribution typically have worse risk-adjusted performance than the ones based on the normal distribution when  $H$ is selected as 60. This is somewhat expected since the mixture models have a larger number of parameters to be estimated, hence, they may require a reasonably large dataset to have more accurate estimators than the normal models.}

\subsection{Replicating the Results on a Synthetic Dataset}
\label{sec:replication}

In the previous subsections, our experiments were based on a single stream of data, namely the monthly S\&P 500 dataset over a 30-year period. Now, we will  try to replicate these results using a synthetic dataset.
In order to carry out a reasonable simulation, we assume that the \textit{true} distribution for the return vector is a mixture of two normal random variables with the parameters given in Table~\ref{table:mix market-wise}.

\subsubsection{True Distribution is Known}
\label{sec:true known}

Under the assumption that the true distribution is known to the investors, we can obtain an \textit{ideal} CVaR minimizing portfolio by solving problem \eqref{eq:cvar mixture one-shot}, or an \textit{approximately ideal} portfolio under a normal distribution with parameters computed using \eqref{eq: r mean var} by  solving problem \eqref{eq:cvar normal}.  We compare the performance of these portfolios ($\mathsf{CVaR\_M}$ and $\mathsf{CVaR\_N}$) with an equally weighted market portfolio, that is, $x^\text{m} = \frac 1n e$ in Table~\ref{tab:true dist} and six other portfolios obtained through the BL approach.
By construction,  the $\mathsf{CVaR\_M}$ portfolio has the smallest CVaR value but its  expected return is also much smaller than that of the market portfolio. However, when combined with the market information through the BL approach,   the $\mathsf{CVaR\_M}$ portfolios retain a similar expected return while having a significantly reduced risk compared to the market portfolio. For example, the expected return of the $\mathsf{CVaR\_M}(\tau=1/4)$ portfolio is only 0.66\% smaller in relative terms than that of the equally weighted market portfolio whereas the standard deviation and CVaR measures are improved by about 9.1\% and 11.6\%, respectively. This is also reflected in the risk-adjusted performance measures.

We observe that the $\mathsf{CVaR\_N}$ approach, which is obtained via a normal approximation, also produces a lower-reward, lower-risk portfolio compared to the market portfolio. Interestingly, its average reward is slightly better than the $\mathsf{CVaR\_M}$ portfolio with higher risk in terms of the CVaR measure. We also see that the performance of the $\mathsf{CVaR\_N}$ portfolios with the BL modification is very similar compared to their $\mathsf{CVaR\_M}$ counterparts, especially when $\tau \ge 1/4$, suggesting that the BL approach may not be very sensitive to the exact distribution.
\begin{table}[h]\small
\centering
\begin{tabular}{c|ccc|cc}
\hline
           &        Avg &      St Dev &      1\% CVaR  &         Avg/St Dev &   Avg/1\% CVaR\\
\hline
    Market &      1.31  &       4.17 &      13.01 &                                0.31&	0.10\\
\hline
$\mathsf{CVaR\_M}(\tau=1/16)$&      1.31 &       3.86 &      11.79&        0.34	&0.11 \\
$\mathsf{CVaR\_M}(\tau=1/4)$ &      1.30 &       3.79 &      11.50 &         0.34&	0.11\\
$\mathsf{CVaR\_M}(\tau=1)$ &      1.29 &       3.61 &      10.67  &           0.36	&0.12\\
        $\mathsf{CVaR\_M}$ &      1.20 &       3.41 &       9.38 &               0.35&	0.13\\
\hline
$\mathsf{CVaR\_N}(\tau=1/16)$ &1.31 &	4.04 &	12.50&      0.32&	0.10\\
$\mathsf{CVaR\_N}(\tau=1/4)$ &1.31 &	3.79 &	11.52&       0.34	&0.11\\
$\mathsf{CVaR\_N}(\tau=1)$ &1.29 &	3.52 &	10.33&  	 0.37	&0.12\\
$\mathsf{CVaR\_N} $&1.24 &	3.39 &	9.62&         	0.37&	0.13\\
\hline
\end{tabular}
\caption{Performance comparison of market-based, risk minimizing and BL-type portfolios under the true distribution. The average return (Avg), standard deviation (St Dev), 1\% CVaR and two risk-adjusted performance measures are reported.}\label{tab:true dist}
\end{table}


Because we assume here that the true distribution of the returns is known, it is interesting to observe what happens when we add an expected return constraint of the form $\hat \mu^T x \ge \mu_0$ to the CVaR minimization problems. In Table \ref{tab:true dist thresh}, we report the performance of the optimal portfolios under this additional constraint with two different values of $\mu_0$. These results demonstrate that one can obtain portfolios with higher expected returns and smaller risk than the market portfolio under the assumption that the true distribution is known. We once again observe the similarity of the performances of the $\mathsf{CVaR\_M}$ and $\mathsf{CVaR\_N}$ portfolios.
\begin{table}[h]\small
\centering
\begin{tabular}{cc|ccc}

\hline
    &       &        Avg &      St Dev &      1\% CVaR \\
\hline
\multirow{ 2}{*}{$\mathsf{CVaR\_M}$}  & $\mu_0 = \hat\mu^T x^\text{m}$  & 1.31 &      3.49 &      9.67 \\
& $\mu_0 = 1.05\hat\mu^T x^\text{m}$   &  1.38 &      3.65 &     10.22 \\
\hline
\multirow{ 2}{*}{$\mathsf{CVaR\_N}$}  & $\mu_0 = \hat\mu^T x^\text{m}$  & 1.31 &      3.46 &      9.86 \\
& $\mu_0 = 1.05\hat\mu^T x^\text{m}$   &  1.38 &      3.62 &     10.37 \\
\hline
%
%
\end{tabular}
\caption{Performance of risk minimizing  portfolios under the true distribution with an expected return constraint.}\label{tab:true dist thresh}
\end{table}

\subsubsection{True Distribution is not Known}
\label{sec:true not known}

So far in this subsection, we assumed that the true distribution of the returns is known. Of course this is not the case in reality. We now relax this assumption and design the following experiment: Suppose that we are given 181 random return vectors drawn from the mixture distribution. We use the first 180 of these vectors to estimate the parameters of the true distribution under the mixture  and normal models. Based on these estimated parameters we  obtain the $\mathsf{CVaR\_M}$ and $\mathsf{CVaR\_N}$  portfolios and their BL versions with varying $\tau$ values using the optimization algorithms described earlier. Finally, we evaluate these portfolios together with the equally weighted market portfolio using the last return vector.  We repeat this experiment  10000 times and report the statistics in Table~\ref{tab:true dist10000}. We again note that the BL version of the $\mathsf{CVaR\_M}$ approach yields portfolios with practically the same average return as the market portfolio for all the values of $\tau$ considered with a significantly reduced risk. For example, the average return of the $\mathsf{CVaR\_M}(\tau=1)$ portfolio is only 0.40\% lower than that of the market portfolio in relative terms whereas the standard deviation and 1\% CVaR measures are improved by about 12.4\% and 17.4\%, respectively. We note that the reduction in the CVaR measure is even more dramatic when smaller values of $\alpha$ are considered. We also point out that the statistics of the $\mathsf{CVaR\_M}$ and $\mathsf{CVaR\_N}$ portfolios with the BL modification   are almost indistinguishable for $\tau \ge 1/4$ while the performance of the $\mathsf{CVaR\_M}(\tau=1/16)$ portfolio is slightly better than that of the $\mathsf{CVaR}(\tau=1/16)$ portfolio.
\begin{table}[h]\footnotesize
\centering
\begin{tabular}{c|ccccc|cc}
\hline
           &        Avg &      St Dev &   1\% CVaR & 0.1\% CVaR & 0.05\% CVaR    &        Avg/St Dev &  Avg/1\% CVaR\\
\hline
    Market &       1.31 &       4.08 &      12.46 &      17.59 &      18.74&        0.32	&0.10	 \\
\hline
$\mathsf{CVaR\_M}(\tau=1/16)$ &       1.31 &       3.80 &      11.27 &      15.79 &      16.73&	0.35	&0.12 \\

$\mathsf{CVaR\_M}(\tau=1/4)$ &       1.31 &       3.72 &      10.94 &      15.34 &      16.10& 0.35	&0.12 \\

$\mathsf{CVaR\_M}(\tau=1)$ &       1.30 &       3.57 &      10.29 &      14.36 &      15.10  &     	0.36	&0.13  \\

$\mathsf{CVaR\_M}$ &       1.24 &       3.49 &       9.75 &      13.90 &      14.78&     	0.35	&0.13  \\
\hline
$\mathsf{CVaR\_N}(\tau=1/16)$&1.31&	3.95	&11.91&	16.90&	17.88& 0.33	&0.11	\\
$\mathsf{CVaR\_N}(\tau=1/4)$&1.31	&3.72	&10.94&	15.53&	16.25& 0.35	&0.12	\\
$\mathsf{CVaR\_N}(\tau=1)$&1.29&	3.51	&10.03&	14.16&	14.88& 0.37&	0.13\\
$\mathsf{CVaR\_N}$&1.25	&3.43&	9.58&	13.69	&14.52&      0.37	&0.13 \\

\hline
\end{tabular}
\caption{Performance comparison of market-based, risk minimizing and BL-type portfolios under the synthetic data with 10000 replications.}\label{tab:true dist10000}
\end{table}

%
%
%
%
%

As a final test, we repeat the above experiment with an expected return constraint and report the results  in Table~\ref{tab:true dist10000 thresh}. When compared to the case without the expected return constraint (Table~\ref{tab:true dist10000}), the $\mathsf{CVaR\_M}$ portfolio has   higher average return and higher risk. Also, its performance is similar to the market portfolio in terms of the average return and it has a smaller risk. Nevertheless,  a BL modified $\mathsf{CVaR\_M}$ portfolio without the expected return constraint, such as $\mathsf{EM}(\tau=1/4)$, would still be preferable.

On the other hand,   the $\mathsf{CVaR\_N}$ portfolio with an expected return constraint produces counterintuitive results: Its average return is smaller compared to the case without the expected return constraint and the risk is higher. This observation suggests that when the true parameters are not known and an additional expected return constraint is enforced,  the $\mathsf{CVaR\_N}$ approach, which assumes a normal distribution for the return vector, may produce poor results and its behavior can be fundamentally different than that of  the $\mathsf{CVaR\_M}$ approach, which assumes a mixture distribution.
\begin{table}[h]\small
\centering
\begin{tabular}{c|ccc}

\hline
           &        Avg &      St Dev &      1\% CVaR \\
\hline
{$\mathsf{CVaR\_M}$}   &1.31&	3.83&	11.12 \\
\hline
{$\mathsf{CVaR\_N}$}   &1.22 &	3.52	&9.98 \\
\hline
%
%
\end{tabular}
\caption{Performance of risk minimizing  portfolios under the synthetic data with 10000 replications with an expected return constraint $\hat \mu^T x \ge  \hat\mu^T x^\text{m}$.}\label{tab:true dist10000 thresh}
\end{table}

\section{Conclusion}
\label{sec:conclusion}

In this paper, we  addressed  the basic question of portfolio construction, combining analytical and market-based approaches. As an important component of the analytical approaches, we proposed to model stock returns as the mixture of two normals. We then explored the possibility of constructing risk minimizing portfolios under different probabilistic models (including the mixture distribution) and risk measures (including Conditional Value-at-Risk). We also proposed a Black-Litterman type approach using an inverse optimization framework to incorporate market information into a portfolio that minimizes the Conditional Value-at-Risk under the mixture distribution. Our computational experiments showed that market portfolios typically yield high-reward, high-risk portfolios, and that risk can be  reduced by  combining analytical and market-based approaches while achieving similar average returns.

\section*{Acknowledgments}
We wish to thank Dr. Rob Stubbs for his insightful suggestions on an earlier version of this paper, {\black and two anonymous reviewers for their constructive comments which have helped us to improve our paper}.
This work was supported in part by NSF grant CMMI1560828 and ONR grant N00014-12-10032.

\bibliographystyle{apacite}
\bibliography{references2}

\end{document}